\documentclass[11pt,notitlepage,a4paper,reqno]{amsart}
\usepackage[usenames]{color}
\usepackage{colortbl}
\usepackage{amssymb,amsfonts, amsmath}
\usepackage[latin1]{inputenc}
%
%
%

\newtheorem{theorem}{Theorem}[section]

\newtheorem{lemma}[theorem]{Lemma}
\newtheorem{proposition}[theorem]{Proposition}
\theoremstyle{definition}
\newtheorem{definition}[theorem]{Definition}
\newtheorem{remark}[theorem]{Remark}
\theoremstyle{plain}

\def\R{\mathbb{R}}
\def \Bdbeta{\boldsymbol{\beta}}
\def \Bdsigma{\boldsymbol{\sigma}}

\numberwithin{equation}{section} \makeatletter
\newcommand{\cvd}{\thinspace\null\nobreak\hfill
\hbox{\vbox{\kern-.2pt\hrule height.2pt
depth.2pt\kern-.2pt\kern-.2pt \hbox to1.8mm{\kern-.2pt\vrule width.4pt
\kern-.2pt\raise1.8mm\vbox to.2pt{}\lower0pt\vtop to.2pt{}\hfil\kern-.2pt
\vrule width.4pt\kern-.2pt}\kern-.2pt\kern-.2pt\hrule height.2pt depth.2pt
\kern-.2pt}}\par\medbreak}

\begin{document}
	\title[Local boundedness for solutions
	of anisotropic problems]{{Local boundedness for solutions
			of a class of non-uniformly elliptic anisotropic problems
	}}
	
	\author{Stefano Biagi, Giovanni Cupini, Elvira Mascolo}
	
	\subjclass[2010]{49N60, 49J40, 35J60, 35A23}

\keywords{Degenerate anisotropic growth; local boundedness; $p,q$-growth conditions; anisotropic Sobolev spaces}
	
	\thanks{The authors are members of the
Gruppo Nazionale per l'Analisi Matematica, la Proba\-bilità e le loro
Applicazioni (GNAMPA) of the Istituto Nazionale di Alta Matematica
(INdAM). S. Biagi is partially 
supported by the PRIN 2022 project 2022R537CS \emph{$NO^3$ - Nodal Optimization, NOnlinear elliptic equations, NOnlocal geometric problems, with a focus on regularity}, founded by the European Union - Next Generation EU., and also by the Indam-GNAMPA project CUP E5324001950001 - {\em Problemi singolari e degeneri: esistenza, unicità e analisi delle proprietà  qualitative delle soluzioni}.
G. Cupini acknow\-ledges
financial support under the National Recovery and Resilience Plan
(NRRP), Mission 4, Component 2, Investment 1.1, Call for tender No.
104 published on 2.2.2022 by the Italian Ministry of University and
Research (MUR), funded by the European Union - NextGenerationEU -
Project PRIN\_CITTI 2022 - Title ``Regularity problems in sub-Riemannian
structures'' - CUP J53D23003760006 - Bando 2022 - Prot. 2022F4F2LH, 
 and also through the INdAM-GNAMPA Project CUP E53C23001670001
 - {\em Interazione ottimale tra la regolarità dei coefficienti e l'anisotropia del problema in funzionali integrali a crescite non standard}.
}
		\begin{abstract}
	We  consider a class of {energy integrals}, associated to nonli\-near and 
	non-uniformly elliptic equations, with integrands $f(x,u,\xi)$
	sati\-sfying anisotropic $p_i,q$-growth conditions of the form
	$$ 
	\sum_{i=1}^n \lambda_i (x)|\xi_i|^{p_i}\le {f}(x,u,\xi)\le \mu (x)\left\{|\xi|^{q} +
	|u|^{\gamma}+1\right\}
	$$
		for some exponents $\gamma\ge q\geq p_i>1$, and non-negative
		functions $\lambda_i,\mu$ 
	subject to suitable summability assumptions. We prove the local boundedness of scalar local quasi-minimizers of such integrals.
	\end{abstract}
	\maketitle
	
\begin{center}
	\dedicatory{\footnotesize{\textit{This paper is dedicated to Gioconda Moscariello  in the
			occasion of her 70th birthday, with friendship and   esteem for her influential contributions on the areas of  Partial Differential Equations and of Calculus of Variations.}}}
\end{center}	

\section{Introduction}
Given a local minimizer $u$ of an integral functional, it is important to investigate the additional structure and integrability properties of the associa\-ted energy density in order to establish the regularity of $u$, at least in the interior of the domain $\Omega$.

A powerful method in regularity theory is due to De Giorgi \cite{degiorgi}, who in\-troduced an approach based on \emph{Caccioppoli-type inequalities on level sets}. This method, through an iteration procedure, first yields local boundedness and then, by a delicate and ingenious argument, establishes the local H\"older continuity of weak solutions to the linear uniformly elliptic equation:
\begin{equation}\label{e:deg}
 \sum_{i,j=1}^{n}\frac{\partial }{\partial x_{i}}\left(
 a_{ij}\left( x\right) \,u_{x_{j}}\right) = 0 \quad \text{in }
 \Omega \subset \mathbb{R}^{n}, \quad n \geq 2,
\end{equation}
where the coefficients $a_{ij}$ are measurable and symmetric functions satisfying the uniform ellipticity condition 
$$\lambda \le a_{ij}(x) \le \mu,$$ 
with $\lambda, \mu$ positive constants such that $\lambda \leq \mu$.

In 1971, Trudinger \cite{Trudinger} considered the same equation \eqref{e:deg}, but with mea\-surable coefficients $a_{ij}$ which satisfy
$$
\lambda(x)|\xi |^{2}\leq  a_{ij}(x)\,\xi_i\xi_j\leq \mu (x)|\xi|^2
$$
without assuming that \( \lambda^{-1}, \mu \in L^\infty \), that is, 
\emph{without the uniform ellipticity condition}. Trudinger proved that, if
$$
\lambda^{-1} \in L_{\mathrm{loc}}^{r}(\Omega), \quad \lambda^{-1} \mu^2 \in L_{\mathrm{loc}}^{s}(\Omega),
$$
for suitable exponents $r,s\geq 1$ satisfying the relation
\begin{equation}\label{e:Trud}\frac{1}{r} + \frac{1}{s} < \frac{2}{n},\end{equation}  
then the weak solutions are locally bounded. For related results, see \cite{BiCuMa, mosc-car-pass1, car-pass2006, CupMarMas17, CupMarMas18,  fabes-kenig-serapioni, mosc1}. Trudinger's  result has been recently improved by Bella and Sch\"affer \cite{bella-sch}, who relaxed the above relation, proving that the right-hand side in \eqref{e:Trud} can be replaced by $\frac{2}{n-1}$.
\medskip

In this paper, we consider integral functionals associated with non-unifor\-mly elliptic problems. More precisely, we investigate functionals from the Calculus of Variations of the form:
\begin{equation} \label{funzionale}
	\mathcal{F}:W^{1,1}_{\mathrm{loc}}(\Omega)\rightarrow [0,\infty], \qquad
	\mathcal{F}(u) := \int_{\Omega}f(x,u,D u)\,\mathrm{d}x,
\end{equation}
where $\Omega \subseteq \mathbb{R}^n$ is a bounded open set and $f$ is a real-valued Carath\'eodory function on $\Omega \times \mathbb{R} \times \mathbb{R}^n$, satisfying the following \emph{anisotropic} and \emph{non-uniform} growth conditions:

\begin{center}
\emph{For some exponents $1 < p_i \le q \le \gamma$, and for measurable functions 
$\lambda_i, \mu: \Omega \rightarrow [0,\infty)$, with 
$i = 1,\ldots,n$, we assume
\begin{equation}\label{eq:crescita}
	\sum_{i=1}^{n}\lambda_{i}(x)|\xi_{i}|^{p_{i}}\le f(x,u, \xi)\le\mu(x)\left(|\xi|^{q} +
		|u|^{\gamma}+1\right)
\end{equation}
for almost every $x \in \Omega$ and every $\xi = (\xi_1,\ldots,\xi_n) \in \mathbb{R}^n$, where
$$
\lambda_{i}^{-1} \in L_{\mathrm{loc}}^{r_{i}}(\Omega), \quad \mu \in L_{\mathrm{loc}}^{s}(\Omega),
$$
for some $r_{i} \in [1,\infty]$ and $s \in (1,\infty]$.}
\end{center}
Under suitable relationships among the exponents (see \eqref{eq:assumptionsParameters}), we prove that local-minimizers (or, more generally,  \emph{quasi-minimizers}) $u$ of \eqref{funzionale} are locally bounded. Moreover, an explicit e\-sti\-mate for $\|u\|_{L^\infty}$ on a given compact subset of $\Omega$ is provided (see \eqref{stima-teorema}). This result extends a previous work by the same authors, non-isotropic case where the case $p = p_i = q$ was considered, see \cite{BiCuMa}. In this setting of anisotropic growth and degenerate ellipticity conditions, we cite the very recent articles \cite{FePaPo} (scalar framework) and  \cite{amb-cup-mas, gao} (vectorial),  where local boundedness results have been proved. 

It is worth mentioning that, 
since the  counterxamples in \cite{gia3,mar87},  the  foundational works by Marcellini~\cite{mar89, mar91} and the  papers  \cite{bocmarsbo, cupmarmas,fussbo1,fussbo2,str},  it is now well-known that 
\emph{the regularity of the minimizers may be lost, even in the non-degenerate case} (i.e., when the coefficients   $\lambda_i$'s are strictly positive), 
if the exponents  $p_i$'s and $q$ are spread out  
\vspace{0.1cm}

From a physical perspective, anisotropy refers to the property of a material to exhibit different mechanical, electrical, thermal, or optical behaviors depending on the direction. This feature arises in many physical models, including magnetic theory (via density functionals), composite materials, convection-diffusion problems, and the analysis of sedimentary rocks. 
  Since the aforementioned papers \cite{mar89, mar91}, anisotropic problems have received considerable attention, and it is nearly impossible to provide an exhaustive list of references. We mention here the articles \cite{BLPV,  mosc-car-pass2, mosc-car-pass3, cupmarmas13}; see also the recent surveys~\cite{mar20-2, MR} for an overview of the subject.
\medskip

\emph{Plan of the paper}. The structure of the paper is as follows. In Section~2, we introduce the notation used throughout the work. Section~3 contains the precise assumptions on the integrand and the statement of the main regularity result. Section~4 is devoted to the proof of a new anisotropic Caccioppoli inequality, which is a key tool in the analysis. Finally, in Section~5, we develop the iteration scheme that leads to the local boundedness of minimizers and establish the \( L^\infty \) estimate.

\section{Notation}
  Throughout the paper, we will tacitly
exploit all the notation listed below; we thus refer the Reader to this list
for any (non-standard) notation encountered.
\begin{itemize}
 
 \item If $A,B\subseteq\R^n$ are arbitrary sets, we write $A\Subset B$
 if $A$ is \emph{relatively compact in $B$}, that is,
 $A$ is bounded and $\overline{A}\subset B$;
 \item Given any $n\in\mathbb{N}$ and any $1\leq \beta < n$, we indicate by $\beta^*$ the 
 \emph{So\-bo\-lev exponent} of $\beta$ with respect to $\R^n$, that is,
 \begin{equation}\label{p*}
 \beta^*:=\frac{n\beta}{n-\beta}.
\end{equation}
 \item Given any $\beta\in[1,\infty]$, we indicate by $\beta'$ the \emph{conjugate exponent}
 of $\beta$ in the classical H\"older inequality, that is,
 $$\beta' = \begin{cases}
 \frac{\beta}{\beta-1}&\text{if $1<\beta<\infty$} \\
 \infty & \text{if $\beta = 1$} \\
 1 & \text{if $\beta = \infty$}
 \end{cases}$$
 \item Given any multi-index $\boldsymbol{\beta} = (\beta_1,\ldots,\beta_n)$ with $1\leq\beta_1,\ldots,\beta_n\leq\infty$, we indicate
 by $\overline{\boldsymbol{\beta}}$ the harmonic average of
 the $\beta_i$'s, that is,
 \begin{equation}\label{definizione-media}
 \boldsymbol{\overline{\beta}} = \frac{n}{\sum_{i = 1}^n1/\beta_i}
\end{equation}
 (with the usual convention $1/\infty = 0$).
\end{itemize}

\section{Assumptions and statement of the main results}
\label{scalar-assumptions}

Consider the integral functional 
\begin{equation} \label{functional}
F(u;\Omega):=\int_{\Omega }f(x,u,Du)\,dx,
\end{equation}
where $\Omega$ is an open and  bounded subset of $\mathbb{R}^n$ (with $n\ge
2$) and 
$$f: \Omega\times \mathbb{R}\times  \mathbb{R}^{n}\rightarrow
\mathbb{R}$$ 
is a Carath\'{e}odory function satisfying the following \emph{structural assumptions}
\begin{itemize}
\item[$(\mathbf{H1})$] $f: \Omega\times \mathbb{R}\times  \mathbb{R}^n\rightarrow
\mathbb{R}$ is convex in the pair $(s,\xi)$ 
 
 \vskip.3cm
\item[$(\mathbf{H2})$] for a.e.\,$x\in\Omega$ and for every $u\in \R$ and $\xi\in\R^n$ we have
\begin{equation}  \label{growth}
\sum_{i=1}^n \lambda_i (x)|\xi_i|^{p_i}\le {f}(x,u,\xi)\le \mu(x)\left\{|\xi|^{q}+|u|^{\gamma}+1\right\},
\end{equation}
where $1\leq p_1,\ldots,p_n\leq q\leq \gamma$, and 
$\lambda_1,\ldots,\lambda_n,\mu:\Omega\to(0,+\infty)$
are non-negative, measurable functions such that
\begin{equation} \label{eq:integrabilitylambdaimu}
\lambda_i ^{-1}\in L_{\mathrm{loc}}^{r_i}(\Omega )
 \,\,(\text{for $1\leq i\leq n$}),\qquad \mu \in L_{\mathrm{loc}%
}^{s}(\Omega ),
\end{equation}
for some exponents $r_i\in [1,\infty]$ and $s\in (1,\infty]$.
\end{itemize}
\vskip.3cm
Recall the definition of quasi-minimizers of functional in \eqref{functional}.
\begin{definition} \label{def:quasiminimizer}
A function $u\in W_{\rm loc}^{1,1}(\Omega)$ is a \emph{quasi-minimizer} of
 \eqref{functional} if
 the\-re exists some constant $Q \geq 1$ (depending on $u$) such that 
 \begin{itemize}
 \item[i)] $f(x,u,Du)\in L^1_{\rm loc}(\Omega)$;
 \vspace*{0.1cm}
 
 \item[ii)] for every $\varphi\in W^{1,1}(\Omega)$ with $\text{supp\ } \varphi \subset\Omega$ we have
\begin{equation*}
F(u;\text{supp\ }\varphi)\le  Q\,F(u+\varphi;\text{supp\ }\varphi),
\end{equation*}
\end{itemize}
\end{definition}

\bigskip
Let us now state  our main regularity  result.

\begin{theorem} \label{main-scalare} 
Let $\Omega\subseteq\R^n$ be a bounded open set, and let $f$
be
a
Ca\-ra\-th\'e\-o\-dory function satisfying assumptions  $(\mathbf{H1})$ and $(\mathbf{H2})$  
\emph{(}of which we inherit the notation\emph{)}. Setting
$\boldsymbol{p} = (p_1,\ldots,p_n)$ and
$$\Bdsigma := (\sigma_1,\ldots,\sigma_n),\qquad\text{with $\sigma_i := p_i\cdot\frac{r_i}{r_i+1}$},$$
(with the convention that $\frac{r_i}{r_i+1} = 1$ if $r_i = \infty$),  we assume that
\begin{equation} \label{eq:assumptionsParameters}
\begin{split}
\mathrm{(i)}\,\,\overline{\boldsymbol{\sigma}} < n\qquad
\mathrm{(ii)}\,\,q < \frac{\overline{\boldsymbol{\sigma}}^*}{s'} \qquad
\mathrm{(iii)}\,\,q\leq\gamma< 
 \frac{\overline{\Bdsigma}^*}{s'}\cdot	\frac{\overline{\boldsymbol{p}}}{q}+q-\overline{\boldsymbol{p}}.
\end{split}
\end{equation}
Then, any quasi-minimizer $u\in W^{1,1}_{\mathrm{loc}}(\Omega)$ of \eqref{functional} is 
locally bounded in $\Omega$.
\vspace*{0.05cm}
 
  More pre\-cisely, given any $x_0\in \Omega$ and any $R_0 < 1$ such that $B_{R_0}(x_0)\Subset\Omega$, 
  for every fixed $0< R \leq R_0$ we have the following estimate
\begin{equation}\label{stima-teorema}
\|u\|_{L^\infty(B_{R/2}(x_0))}
   \leq \frac{c}{R^{\vartheta_2}}
     \big[1+
    \|u\|_{L^{\overline{\Bdsigma}^*}(B_{R}(x_0))}\big]
    ^{\vartheta_1}
\end{equation}
where $\vartheta_1,\vartheta_2$ are explicitly given by
\[\vartheta_1:=\frac{\overline{\boldsymbol{\sigma}}^*\gamma-qs'\overline{\boldsymbol{p}}}{\overline{\boldsymbol{p}}\cdot\overline{\boldsymbol{\sigma}}^*-qs'(\gamma-q+\overline{\boldsymbol{p}})}\]
\[\vartheta_2:=\frac{q\overline{\boldsymbol{\Bdsigma}}^*}
{\overline{\boldsymbol{p}}\cdot\overline{\boldsymbol{\Bdsigma}}^*-{qs'}({\gamma-q}+
	\overline{\boldsymbol{p}})}\]
and 
 $c > 0$ is a constant only depending on the data in assumption $\mathbf{(H2)}$.
\end{theorem}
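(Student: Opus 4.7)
The plan is to implement the De Giorgi--Stampacchia level-set technique in the anisotropic degenerate framework. The argument splits into three stages, mirroring the outline of Sections~4--5 of the paper: (a) an anisotropic Caccioppoli-type estimate on super-level sets, (b) a passage from weighted to unweighted gradient norms followed by an anisotropic Sobolev embedding, and (c) a De Giorgi/Moser iteration delivering the explicit estimate \eqref{stima-teorema}. For (a), I would fix $k \ge 0$, a ball $B_R \Subset \Omega$, and a Lipschitz cut-off $\eta \in C_c^\infty(B_R)$ with $0 \le \eta \le 1$ and $\eta \equiv 1$ on $B_{R/2}$, and test the quasi-minimality inequality against $\varphi := -\eta^{\alpha}(u-k)_+$ for a suitably large $\alpha$. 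Setting $w := u+\varphi$, the competitor equals $u$ off $A_k := \{u > k\}$, so the corresponding integrals cancel; on $A_k$ one has $w = (1-\eta^{\alpha})u + \eta^{\alpha}k$. Using convexity in $(s,\xi)$ from $(\mathbf{H1})$ to bound $f(x,w,Dw)$ by a convex combination of $f(x,u,\cdot)$ and $f(x,k,0)$, and then applying the lower and upper growth in \eqref{growth}, one should arrive at an anisotropic Caccioppoli inequality of the schematic form
\begin{equation*}
\int_{A_k \cap B_R}\eta^{\alpha} \sum_{i=1}^{n}\lambda_{i}\bigl|\partial_i (u-k)_+\bigr|^{p_i}\, dx \le C\int_{A_k\cap B_R}\mu\Bigl\{(u-k)_+^{q}|D\eta|^{q}+|u|^{\gamma}+k^{\gamma}+1\Bigr\} dx.
\end{equation*}

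For (b), the weights $\lambda_i$ are peeled off the left. The definition $\sigma_i = p_i r_i/(r_i+1)$ is exactly calibrated so that H\"older's inequality with conjugate exponents $p_i/\sigma_i$ and $r_i+1$ yields
\begin{equation*}
\int_{B_R}|\partial_i v|^{\sigma_i}\,dx \le \Bigl(\int_{B_R}\lambda_i|\partial_i v|^{p_i}\,dx\Bigr)^{\sigma_i/p_i} \Bigl(\int_{B_R}\lambda_i^{-r_i}\,dx\Bigr)^{\sigma_i/(p_i r_i)},
\end{equation*}
applied to a suitable $v$ built out of $\eta$ and $(u-k)_+$. Condition~(i), $\overline{\boldsymbol{\sigma}} < n$, then opens the door to Troisi's anisotropic Sobolev embedding $\|v\|_{L^{\overline{\boldsymbol{\sigma}}^*}(B_R)} \le C\prod_i\|\partial_i v\|_{L^{\sigma_i}(B_R)}^{1/n}$. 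On the right-hand side of the Caccioppoli inequality, H\"older with $\mu\in L^s$ produces the exponent $qs'$ on $(u-k)_+$; condition~(ii), $q < \overline{\boldsymbol{\sigma}}^*/s'$, guarantees $qs'<\overline{\boldsymbol{\sigma}}^*$, which is precisely what permits interpolation and allows the gradient side, after embedding, to dominate a power of $\|(u-k)_+\|_{L^{\overline{\boldsymbol{\sigma}}^*}}$ times $|A_k \cap B_R|$ to a positive power, i.e., a genuine reverse-H\"older gain.

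For (c), let $J_j := \|(u-k_j)_+\|_{L^{\overline{\boldsymbol{\sigma}}^*}(B_{R_j})}$ on a shrinking radii sequence $R_j\downarrow R/2$ and an increasing level sequence $k_j\uparrow k_\infty$ chosen geometrically. Combining (a) and (b) should yield a recursion of the schematic form $J_{j+1} \le C\, 2^{cj}(k_j-k_{j-1})^{-\nu}\bigl[J_j^{1+\delta}+(\text{lower-order from }|u|^\gamma)\bigr]$; condition~(iii) is precisely the algebraic balance among $\overline{\boldsymbol{\sigma}}^*, \overline{\boldsymbol{p}}, q, \gamma, s'$ which both forces $\delta>0$ and makes the $|u|^\gamma$ contribution subordinate. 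A standard iteration lemma then gives $J_j\to 0$, provided the starting level $k_0$ is comparable to $R^{-\vartheta_2}(1+\|u\|_{L^{\overline{\boldsymbol{\sigma}}^*}(B_R)})^{\vartheta_1}$; tracking the scaling identifies $\vartheta_1,\vartheta_2$ as in the statement. Applying the same argument to $-u$---which is again a quasi-minimizer under $(\mathbf{H1})$--$(\mathbf{H2})$ by the symmetry of the growth bounds in $s$---produces the matching lower bound, and \eqref{stima-teorema} follows.

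The main obstacle is the Caccioppoli step (a): because $q>p_i$, a term of type $(1-\eta^{\alpha})^q|Du|^q$ coming from the upper growth cannot be absorbed directly into $\sum\lambda_i\eta^\alpha|\partial_i u|^{p_i}$ on the left. The convexity assumption $(\mathbf{H1})$ is indispensable here, since it allows one to bypass this bulk term and replace it by a boundary-type expression proportional to $|D\eta|$; coordinating that substitution with the choice of the cut-off exponent $\alpha$ is the technical heart of the proof. The secondary delicate point is the algebra needed for condition~(iii) to reproduce exactly the explicit exponents $\vartheta_1,\vartheta_2$ of \eqref{stima-teorema}---a routine but bookkeeping-intensive computation that confirms the scheme actually closes.
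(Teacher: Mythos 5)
Your proposal reproduces the paper's own strategy: test quasi-minimality against $\varphi = -\eta^q(u-k)_+$, use convexity to obtain a Caccioppoli inequality on super-level sets, peel off the weights by H\"older's inequality calibrated to $\sigma_i = p_i r_i/(r_i+1)$ and apply Troisi's anisotropic Sobolev embedding, then run a De Giorgi iteration whose closure is exactly condition (iii), finishing with the reflection $u\mapsto -u$. A few details are glossed over but do not change the route: the convex split produces $\eta^q f\bigl(x,k,q\eta^{-1}(k-u)D\eta\bigr)$ rather than $f(x,k,0)$, the residual bulk term $(1-\eta^q)f(x,u,Du)$ must be absorbed by hole-filling plus the standard iteration lemma, and the paper's iteration quantity $J_h$ lives in $L^{qs'}$ rather than $L^{\overline{\boldsymbol{\sigma}}^*}$.
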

\begin{remark} \label{rem:SulThmPrincipale}
 A few remarks on Theorem \ref{main-scalare} are in order.
 \begin{enumerate}
   \item The second inequality in assumption \eqref{eq:assumptionsParameters} ensures that the range of \emph{admissible values} for $\gamma$
   in \eqref{eq:assumptionsParameters}-(iii) is non-empty: indeed, we have
   $$\frac{\overline{\Bdsigma}^*}{s'}\cdot	\frac{\overline{\boldsymbol{p}}}{q}+q-\overline{\boldsymbol{p}} > \overline{\boldsymbol{p}}+(q-\overline{\boldsymbol{p}}) = q.$$
 	\item In the particular case when Assumption $\mathbf{(H2)}$ holds with 
   \emph{$p_i=p=q$},
  then Assumption \eqref{eq:assumptionsParameters}-(ii) 
   is equivalent to \[\frac{1}{r}+\frac{1}{s}<\frac{p}{n},\]
   which is consistent with condition (1.9)  in \cite{BiCuMa}. 
   If, moreover,  we also have 
   $r_i=s=\infty$,   
Assumption \eqref{eq:assumptionsParameters}-(iii) boils down to 
$$p<n\quad\text{and}\quad \gamma<p^*$$
and  we have
\begin{align*}
(\ast)\quad & \vartheta_1= \frac{p^*\gamma-p^2}{p(p^*-\gamma)} \\ (\ast)\quad & \vartheta_2 = \frac{p^*}{p^*-\gamma}.
\end{align*}
\item The boundedness result in Theorem \ref{main-scalare} still holds if $f$ satisfies assumption $\mathbf{(H2)}$ \emph{with $0\leq \gamma < q $}. Indeed, in this case we have
  \begin{align*}
\mu(x)\left\{|\xi|^{q}+|u|^{\gamma}+1\right\}
  & \leq \mu(x)\left\{|\xi|^{q}+(1+|u|^{q})+1\right\}  	 \\
  & \leq 2\mu(x)\left\{|\xi|^{q}+|u|^{q}+1\right\},
  \end{align*}
  and this shows that $f$ satisfies $\mathbf{(H2)}$ \emph{also with the choice $\gamma = q$}. As a consequence, we can apply Theorem \ref{main-scalare} with $\gamma = q$, thus obtaining the following \emph{specialized}
  version of estimate \eqref{stima-teorema}:
  $$\|u\|_{L^\infty(B_{R/2}(x_0))}
   \leq \frac{c}{R^{\frac{q\overline{\boldsymbol{\Bdsigma}}^*}
     {\overline{\boldsymbol{p}}(\overline{\boldsymbol{\Bdsigma}}^*-qs')}}}
     \big[1+
    \|u\|_{L^{\overline{\Bdsigma}^*}(B_{R}(x_0))}\big]
    ^{\frac{q(\overline{\boldsymbol{\sigma}}^*-s'\overline{\boldsymbol{p}})}{\overline{\boldsymbol{p}}(\overline{\boldsymbol{\sigma}}^*-qs')}}
$$
(which holds true for every quasi-minimizer $u$ of \eqref{functional}, every $x_0\in\Omega$ and every $0<R\leq R_0\leq 1$ with $B_{R_0}(x_0)\Subset\Omega$).
 \end{enumerate}
\end{remark}

\section{Preliminary results} \label{Preliminar-results-scalar}
In order to keep the paper as self-contained as possible,
we collect here below all the preliminary (and mostly well-known) results which will be used
in the sequel.
Throughout what follows, we tacitly inherit all the notation introduced in the previous
sections (in particular, those appearing in the structural 
assumptions $\mathbf{(H1)}$\,-\,$\mathbf{(H2)}$).
\vspace*{0.1cm}

\noindent\textbf{1)\,\,Anisotropic Sobolev spaces.}
 Let $\mathcal{O}\subseteq\R^n$ be an arbitrary open set, and let 
 $\Bdbeta = (\beta_1,\ldots,\beta_n)$, with $1\leq \beta_1,\ldots,\beta_n\leq \infty$.
  The \emph{anisotropic Sobolev space}
  $W^{1,\Bdbeta}(\mathcal{O})$ is the space
  of functions defined as follows
\begin{equation*}
\begin{split}
& W^{1,\Bdbeta}(\mathcal{O})=\big\{ u\in W^{1,1}(\mathcal{O}):\,
\text{$u_{x_{i}}\in L^{\beta_{i}}(\mathcal{O})$ for all $1\leq i\leq n$}\big\},
\end{split}
\end{equation*}
 endowed with the norm
\begin{equation*}
\Vert u\Vert_{W^{1,\Bdbeta}}(\mathcal{O}):=
\Vert u\Vert_{L^{1}(\mathcal{O})}+\sum_{i=1}^{n}\Vert u_{x_{i}}\Vert_{L^{\beta_{i}}(\mathcal{O})}.
\end{equation*}
We also define
$$W_{0}^{1,\Bdbeta}(\mathcal{O})=
W_{0}^{1,1}(\mathcal{O})\cap W^{1,\Bdbeta}(\mathcal{O}).$$
Finally, we say that $u\in W^{1,\Bdbeta}_{\mathrm{loc}}(\mathcal{O})$ if
$$\text{$u\in W^{1,\Bdbeta}(\mathcal{V})$
for every open set $\mathcal{V}\Subset\mathcal{O}$}.$$
In view of the growth condition in assumption $\mathbf{(H2)}$, the anisotropic Sobolev spaces
are naturally related with the functional $F$ defined in \eqref{functional}; more precisely,
we have the following proposition.
\begin{proposition}\label{crescitaPaolo}
Let $\Omega\subseteq\R^n$ be a bounded open set, and let $f$
be a Ca\-ra\-th\'e\-odory function satisfying assumption $\mathbf{(H2)}$. We set
$$\boldsymbol{\sigma} = (\sigma_1,\ldots,\sigma_n),\quad
\text{with $\sigma_i = p_i \cdot\frac{r_i}{r_i+1}\,\,(1\leq i\leq n)$}$$
\emph{(}here, $p_i$ and $r_i$ are the exponents introduced 
in \eqref{eq:integrabilitylambdaimu}\emph{)}.

Then the following estimate holds
\begin{align}
& \frac{1}{n}\sum_{i=1}^{n} \Vert \lambda_i^{-1}\Vert_{L^{r_{i}}(\Omega ^{\prime })}^{-1}\left\Vert
v_{x_{i}}\right\Vert^{p_{i}}_{L^{\sigma_i}(\Omega^{\prime})} 
\leq \int_{\Omega^{\prime }}f(x,v,Dv)\,dx \label{stima-con-funzionale-da-sotto} 
\end{align} 
for every $v\in W_{\rm loc}^{1,1}(\Omega)$  
and every bounded open set $\Omega'\Subset\Omega$.
\vspace*{0.1cm}

As a consequence, if $u\in W^{1,1}_{\mathrm{loc}}(\Omega)$ is a quasi\,-\,minimizer of the functional
$F$ associated with $f$ and defined in \eqref{functional}, we have
\begin{equation} \label{eq:WFinWsgima}
 u\in W^{1,\boldsymbol{\sigma}}_{\mathrm{loc}}(\Omega).
\end{equation}
\end{proposition}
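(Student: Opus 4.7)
The plan is to derive \eqref{stima-con-funzionale-da-sotto} by a componentwise application of H\"older's inequality, with the exponents $\sigma_i = p_i r_i/(r_i+1)$ precisely calibrated to the pair $(p_i,r_i)$ that appears in the lower bound of $f$ and in the summability of $\lambda_i^{-1}$. The consequence \eqref{eq:WFinWsgima} will then follow by inserting a quasi-minimizer $u$ into this estimate.

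Fix $i\in\{1,\ldots,n\}$ and a bounded open set $\Omega'\Subset\Omega$, and split
$$|v_{x_i}|^{\sigma_i} \,=\, \bigl(\lambda_i^{\sigma_i/p_i}\,|v_{x_i}|^{\sigma_i}\bigr)\cdot \lambda_i^{-\sigma_i/p_i}.$$
I would then apply H\"older's inequality with the exponents $p_i/\sigma_i$ and its conjugate $p_i/(p_i-\sigma_i)$. The whole point of the definition of $\sigma_i$ is the numerological identity
$$p_i-\sigma_i \,=\, \frac{p_i}{r_i+1}, \qquad\text{hence}\qquad \frac{\sigma_i}{p_i-\sigma_i} \,=\, r_i,$$
so that the factor involving $\lambda_i^{-1}$ becomes exactly $\|\lambda_i^{-1}\|_{L^{r_i}(\Omega')}^{r_i/(r_i+1)}$. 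Raising the resulting inequality to the power $p_i/\sigma_i = (r_i+1)/r_i$ produces
$$\|v_{x_i}\|_{L^{\sigma_i}(\Omega')}^{p_i} \,\leq\, \|\lambda_i^{-1}\|_{L^{r_i}(\Omega')}\,\int_{\Omega'}\lambda_i(x)\,|v_{x_i}|^{p_i}\,\d x.$$
Dividing by $\|\lambda_i^{-1}\|_{L^{r_i}(\Omega')}$, summing on $i$ and using the left-hand side of \eqref{growth} to bound $\sum_i \lambda_i|v_{x_i}|^{p_i}\leq f(x,v,Dv)$ pointwise, I obtain \eqref{stima-con-funzionale-da-sotto} (the factor $1/n$ in the statement is slightly weaker than what the sum directly delivers, but is convenient later). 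The borderline case $r_i=\infty$ does not require H\"older, since then $\sigma_i=p_i$ and the bound is an immediate pointwise consequence of $\lambda_i^{-1}\in L^\infty_{\mathrm{loc}}$.

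For the consequence $u\in W^{1,\Bdsigma}_{\mathrm{loc}}(\Omega)$, I would simply apply the main estimate with $v=u$ on an arbitrary $\Omega'\Subset\Omega$: by Definition~\ref{def:quasiminimizer}(i) the quantity $\int_{\Omega'}f(x,u,Du)\,\d x$ is finite, and by \eqref{eq:integrabilitylambdaimu} each $\|\lambda_i^{-1}\|_{L^{r_i}(\Omega')}$ is also finite, so $u_{x_i}\in L^{\sigma_i}(\Omega')$ for every $i$. Combined with $u\in W^{1,1}_{\mathrm{loc}}(\Omega)$, this gives \eqref{eq:WFinWsgima}. I do not see a genuine obstacle: the whole argument is a single, well-chosen H\"older inequality, and the only subtle point is to recognise in advance that the choice $\sigma_i(r_i+1)=p_i r_i$ is exactly what makes the exponents match both the natural energy $\int \lambda_i|v_{x_i}|^{p_i}$ and the integrability $\lambda_i^{-1}\in L^{r_i}$.
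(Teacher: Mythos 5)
Your proposal is correct and follows essentially the same route as the paper: the same split $|v_{x_i}|^{\sigma_i}=\lambda_i^{-\sigma_i/p_i}\cdot\bigl(\lambda_i^{\sigma_i/p_i}|v_{x_i}|^{\sigma_i}\bigr)$ and the same H\"older exponents $\tfrac{r_i+1}{r_i}$ and $r_i+1$ (yours being written as $p_i/\sigma_i$ and $p_i/(p_i-\sigma_i)$), with the same handling of the case $r_i=\infty$. The only difference is that the paper applies the growth bound $\int\lambda_i|v_{x_i}|^{p_i}\le\int f$ separately for each $i$ before summing (hence the $1/n$), whereas you sum first and then use the pointwise bound $\sum_i\lambda_i|v_{x_i}|^{p_i}\le f$, which, as you observe, gives the estimate without the $1/n$; this is a harmless sharpening of the same argument.
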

\begin{proof} 
Let  $\Omega',v$ be as in the statement. We distinguish two cases.
\medskip

\noindent \textsc{Case I)}: $r_i<\infty$. In this case, by exploiting
the classical H\"older inequality with exponents $\frac{r_i+1}{r_i}$ and $\displaystyle r_i+1$, 
we have
\begin{equation} \label{stima-funzionale1}
\begin{split}
\int_{\Omega'}|v_{x_i}|^{\frac{p_ir_i}{r_i+1}}\,dx=&\int_{\Omega'}\lambda_i^{-\frac{r_i}{r_i+1}}\lambda_i^{\frac{r_i}{r_i+1}}|v_{x_i}|^{\frac{p_ir_i}{r_i+1}}\,dx
\\ 
\le & \Vert \lambda_i^{-1}\Vert_{L^{r_{i}}(\Omega ^{\prime })}^{\frac{r_i}{r_i+1}}
\left(\int_{\Omega'}\lambda_i |v_{x_i}|^{p_{i}}\,dx\right)^{\frac{r_i}{r_i+1}}.	
\end{split}	
\end{equation}
By the growth condition \eqref{growth}, we have
$$
\int _{\Omega'} \lambda_i |v_{x_i}|^{p_{i}}\,dx  \leq  \int_{\Omega'}f(x,v, Dv)\,dx;
$$
as a consequence, we obtain
	\begin{equation} \label{stima-funzionale1bis}
	\left(\int_{\Omega'} |v_{x_i}|^{\frac{p_i r_i}{r_i+i}}\,dx \right)^{\frac{r_i+1}{r_i }} \leq  \Vert \lambda_i^{-1}\Vert_{L^{r_{i}(\Omega ^{\prime })}}\int_{\Omega'}f(x,v, Dv)\,dx,
\end{equation}

\noindent \textsc{Case II):} $r_i = \infty$. In this case, the convention $\frac{r_i}{r_i+1}=1$ applies. Thus,
	\begin{equation*}
		\begin{split}
		\int_{\Omega'}|v_{x_i}|^{\frac{p_ir_i}{r_i+1}}\,dx=&
				\int_{\Omega'}|v_{x_i}|^{p_i}\,dx=	
		\int_{\Omega'}\lambda_i^{-1}\lambda_i |v_{x_i}|^{p_i}\,dx
		\\ 
		\le & \Vert \lambda_i^{-1}\Vert_{L^{\infty}(\Omega ^{\prime })}		\int_{\Omega'}\lambda_i |v_{x_i}|^{p_{i}}\,dx.	
		\end{split}
	\end{equation*}
	that is \eqref{stima-funzionale1} holds true also for $r_i=\infty$. Therefore, reasoning as above, we conclude that \eqref{stima-funzionale1bis} holds also in this case. 
\medskip

From \eqref{stima-funzionale1bis}, adding with respect to $i$ we readily derive the desired \eqref{stima-con-funzionale-da-sotto}.

\medskip

Let now $u\in W^{1,1}_{\mathrm{loc}}(\Omega)$ be a quasi-minimizer of
 \eqref{functional} (in the sense of Definition \ref{def:quasiminimizer}).
 Since, by definition we know that
 $$f(x,u,Du)\in L^1_{\mathrm{loc}}(\Omega),$$
 from \eqref{stima-con-funzionale-da-sotto} we immediately derive \eqref{eq:WFinWsgima}.
\end{proof}

As for \emph{classical case $\beta_1 = \ldots = \beta_n = \beta$}
considered in \cite{BiCuMa} (and in view of Pro\-position
\ref{crescitaPaolo}),
the following Em\-bedding Theorem for 
$W^{1,(\beta_1,\ldots,\beta_n)}(\mathcal{O})$ 
will play key role in our argument;
for a proof we refer, e.g., to \cite{BLPV,tro}.
\begin{theorem}\label{immersione-anisotropo}
 Let
 $\mathcal{O}\subseteq\R^n$ 
 be a \emph{bounded} open set, and let
 $\Bdbeta = (\beta_1,\ldots,\beta_n)$, with
 $1\leq \beta_1\leq\ldots\leq\beta_n\leq\infty$. We suppose that
 $$\overline{\boldsymbol{\beta}} < n.$$
 Then, the following facts hold.
 \begin{enumerate}
	\item[1)]  There exists a constant $c > 0$, only depending on 
 $n, \Bdbeta$, such that 
 \begin{equation*}
\|u\|_{L^{{\overline{\Bdbeta}^*}}(\mathcal{O} )}\leq c  
\left(\Pi_{i=1}^{n} \|u_{x_i}\|_{L^{\beta_i}(\mathcal{O} )}
\right)^{\frac{1}{n}}\quad\text{}
\end{equation*}
for every $u\in W_0^{1,\Bdbeta}(\mathcal{O})$.

\item[2)] Assume that $\max_i\beta_i < \overline{\Bdbeta}^*$. Then, for every open set $\mathcal{V}\Subset\mathcal{O}$
we have the following \emph{continuous embedding} of $W^{1,\Bdbeta}(\mathcal{O})$:
$$W^{1,\Bdbeta}(\mathcal{O})\hookrightarrow L^{\overline{\Bdbeta}^*}(\mathcal{V}).$$
 \end{enumerate}
\end{theorem}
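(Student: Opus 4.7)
The plan is to establish Part (1) by the classical Troisi-style slicing argument, and to deduce Part (2) from Part (1) via a cutoff procedure combined with a short bootstrap on the global integrability of $u$.

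For Part (1), by density it suffices to argue for $u\in C_c^\infty(\mathcal{O})$ extended by zero to $\R^n$. Starting from the fundamental-theorem-of-calculus bound
$$|u(x)|\le \int_{-\infty}^{x_i}|\partial_iu(x_1,\ldots,t,\ldots,x_n)|\,dt=:G_i(\hat{x}_i),$$
one obtains $|u(x)|^{n/(n-1)}\le\prod_{i=1}^nG_i(\hat{x}_i)^{1/(n-1)}$, so that iterated application of the generalized H\"older / Loomis--Whitney inequality delivers the isotropic case $\|u\|_{L^{n/(n-1)}(\R^n)}\le c\prod_{i=1}^n\|\partial_iu\|_{L^1(\R^n)}^{1/n}$. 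I would then promote this to the general $\Bdbeta$-case by applying it to $v:=|u|^{\alpha-1}u$ for a suitable $\alpha>1$: using $\partial_iv=\alpha|u|^{\alpha-1}\partial_iu$ and H\"older with conjugate pair $(\beta_i,\beta_i')$ on each of the $n$ factors produces $L^{(\alpha-1)\beta_i'}$-norms of $u$ on the right. The choice of $\alpha$ that matches the left-hand $L^{\alpha n/(n-1)}$-norm of $u$ is dictated precisely by the definition of the harmonic average $\overline{\Bdbeta}$ and yields $\overline{\Bdbeta}^{*}$ as the Sobolev exponent on the left, after absorbing and taking the $\alpha$-th root.

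For Part (2), fix $\mathcal{V}\Subset\mathcal{O}$ and pick $\eta\in C_c^\infty(\mathcal{O})$ with $\eta\equiv 1$ on $\mathcal{V}$ and $0\le\eta\le 1$. If $u\in L^{\beta_n}_{\loc}(\mathcal{O})$, then $\eta u\in W_0^{1,\Bdbeta}(\mathcal{O})$ and Part (1) applied to $\eta u$, together with the product rule $(\eta u)_{x_i}=\eta u_{x_i}+\eta_{x_i}u$, yields
$$\|u\|_{L^{\overline{\Bdbeta}^{*}}(\mathcal{V})}\le c\Bigl(\|u\|_{L^{\beta_n}(\supp\eta)}+\sum_{i=1}^n\|u_{x_i}\|_{L^{\beta_i}(\supp\eta)}\Bigr),$$
with a constant depending on $\eta$. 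It remains to secure the missing integrability $u\in L^{\beta_n}_{\loc}$. Since $\beta_1:=\min_i\beta_i\le\overline{\Bdbeta}<n$ and $L^{\beta_i}\hookrightarrow L^{\beta_1}$ on bounded subsets, one has $u\in W^{1,\beta_1}_{\loc}(\mathcal{O})$ and hence $u\in L^{\beta_1^{*}}_{\loc}(\mathcal{O})$ by the classical isotropic Sobolev embedding. I would iterate on a shrinking chain of intermediate sets: given $u\in L^{q_k}_{\loc}$, apply Part (1) to a cutoff of $u$ with the truncated exponents $\tilde{\beta}_i^{(k)}:=\min(\beta_i,q_k)$ (for which all product-rule terms are integrable), obtaining a new exponent $q_{k+1}=\overline{\tilde{\Bdbeta}^{(k)}}^{*}>q_k$ that increases monotonically toward $\overline{\Bdbeta}^{*}$. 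The hypothesis $\max_i\beta_i<\overline{\Bdbeta}^{*}$ guarantees that this sequence reaches $q_K\ge\beta_n$ in finitely many steps, at which point the cutoff argument above closes.

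The main obstacle is the bookkeeping of exponents at two junctions: first, the choice of the power $\alpha$ in Part (1) that forces $(\alpha-1)\beta_i'$ to be compatible across all directions $i$ with the left-hand exponent $\alpha n/(n-1)$ (this is where the harmonic mean $\overline{\Bdbeta}$ appears), and second, the verification that the bootstrap sequence $q_k$ in Part (2) is strictly increasing and surpasses $\beta_n$ after finitely many steps. All other ingredients---density of smooth compactly supported functions, Fubini-type slicing, iterated H\"older, and the classical isotropic Sobolev embedding---are standard and can be invoked from the literature (see~\cite{BLPV,tro}).
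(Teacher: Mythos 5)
The paper does not actually prove Theorem \ref{immersione-anisotropo}: it is quoted as a known embedding with a pointer to \cite{BLPV,tro}. So the comparison is between your sketch and the classical Troisi argument you are implicitly reconstructing. Your overall strategy is the right one, and Part (2) is fine in outline: the cutoff, the product rule, the truncated exponents $\tilde{\beta}_i^{(k)}=\min(\beta_i,q_k)$ and the role of the hypothesis $\max_i\beta_i<\overline{\Bdbeta}^*$ in pushing the bootstrap past $\beta_n$ are exactly the standard bookkeeping (cf.\ \cite{fussbo2,BLPV}), and you correctly flag the monotonicity of $q_k$ as the point to verify.

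The genuine gap is in the closure of Part (1). With a single power $\alpha$ chosen so that $\alpha n/(n-1)=\overline{\Bdbeta}^*$, the slicing-plus-H\"older step leaves you with
$\|u\|_{L^{\overline{\Bdbeta}^*}}^{\alpha}\le c\,\prod_{i}\|u\|_{L^{(\alpha-1)\beta_i'}}^{(\alpha-1)/n}\,\prod_{i}\|u_{x_i}\|_{L^{\beta_i}}^{1/n}$,
and the ``absorbing'' you invoke requires $\prod_{i}\|u\|_{L^{(\alpha-1)\beta_i'}}^{1/n}\le C\,\|u\|_{L^{\overline{\Bdbeta}^*}}$. A direct computation shows that the harmonic mean of the exponents $(\alpha-1)\beta_i'$ is exactly $\overline{\Bdbeta}^*$, so Lyapunov's interpolation inequality (log-convexity of $1/p\mapsto\log\|u\|_{L^p}$) yields precisely the \emph{reverse} bound $\|u\|_{L^{\overline{\Bdbeta}^*}}\le\prod_i\|u\|_{L^{(\alpha-1)\beta_i'}}^{1/n}$; the inequality you need is false in general on $\R^n$ (the $n$ norms are independent quantities constrained only by log-convexity, and if $\beta_1=1$ one of the exponents $(\alpha-1)\beta_i'$ is even $\infty$, so that factor cannot be controlled by any $L^{\overline{\Bdbeta}^*}$ norm). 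The classical proofs circumvent this either by raising $|u|$ to \emph{direction-dependent} powers already in the slicing step, so that each factor closes against the left-hand side separately, or by an induction over the exponents with intermediate interpolations; one of these devices must be supplied before Part (1) is a proof rather than a restatement of the difficulty.
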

By combining the previous Proposition \ref{crescitaPaolo} with 
the ``anisotropic'' embed\-ding in Theo\-rem \ref{immersione-anisotropo}, we derive
the following simple
yet crucial ``higher-inte\-grability'' result
for quasi-minimizers of \eqref{functional}.
\begin{proposition} \label{prop:HigherIntegrab}
Let the assumptions and the notation of Theorem \ref{main-scalare} be in force,
and let $u\in W^{1,1}_{\mathrm{loc}}(\Omega)$
be a quasi-minimizer of \eqref{functional}. Then,
$$u\in L^{\overline{\boldsymbol{\sigma}}^*}_{\mathrm{loc}}(\Omega).$$
Hence, in particular, $u\in L^{qs'}_{\mathrm{loc}}(\Omega)$.
\end{proposition}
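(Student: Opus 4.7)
The plan is to combine the two preparatory results that have just been established: the fact that any quasi-minimizer lies in the anisotropic space $W^{1,\boldsymbol{\sigma}}_{\mathrm{loc}}(\Omega)$ (Proposition \ref{crescitaPaolo}), and the anisotropic embedding in Theorem \ref{immersione-anisotropo}(2). So the proposition should reduce to verifying that the hypotheses of that embedding theorem are satisfied for the exponent vector $\boldsymbol{\sigma}$, and this will be the core of the argument.

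First I would invoke Proposition \ref{crescitaPaolo} to conclude directly that $u\in W^{1,\boldsymbol{\sigma}}_{\mathrm{loc}}(\Omega)$. Then I want to apply Theorem \ref{immersione-anisotropo}(2) to the vector $\boldsymbol{\beta}=\boldsymbol{\sigma}$ (relabeling the coordinates so that the $\sigma_i$'s are non-decreasing, which does not affect $\overline{\boldsymbol{\sigma}}$ nor $\overline{\boldsymbol{\sigma}}^*$). Two hypotheses must be checked: (a) $\overline{\boldsymbol{\sigma}}<n$, which is exactly assumption \eqref{eq:assumptionsParameters}-(i); and (b) $\max_i \sigma_i<\overline{\boldsymbol{\sigma}}^*$. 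For (b), I would observe the chain of inequalities
\[
\sigma_i \;=\; p_i\cdot\frac{r_i}{r_i+1}\;\le\; p_i\;\le\; q\;<\;\frac{\overline{\boldsymbol{\sigma}}^*}{s'}\;\le\;\overline{\boldsymbol{\sigma}}^*,
\]
where the strict middle inequality is precisely assumption \eqref{eq:assumptionsParameters}-(ii), and the last inequality follows from $s'\ge 1$ (since $s\in(1,\infty]$). Hence both hypotheses of the embedding theorem are met.

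Applying Theorem \ref{immersione-anisotropo}(2) to an arbitrary bounded open set $\mathcal{W}\Subset\Omega$, after interposing an auxiliary $\mathcal{V}$ with $\mathcal{W}\Subset\mathcal{V}\Subset\Omega$, gives the continuous embedding $W^{1,\boldsymbol{\sigma}}(\mathcal{V})\hookrightarrow L^{\overline{\boldsymbol{\sigma}}^*}(\mathcal{W})$. Combined with the membership $u\in W^{1,\boldsymbol{\sigma}}(\mathcal{V})$ obtained in the first step, this yields $u\in L^{\overline{\boldsymbol{\sigma}}^*}(\mathcal{W})$, whence $u\in L^{\overline{\boldsymbol{\sigma}}^*}_{\mathrm{loc}}(\Omega)$ by the arbitrariness of $\mathcal{W}$. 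Finally, assumption \eqref{eq:assumptionsParameters}-(ii) also tells us that $qs'<\overline{\boldsymbol{\sigma}}^*$, so on any relatively compact (hence finite-measure) subset of $\Omega$ the standard inclusion of Lebesgue spaces gives $L^{\overline{\boldsymbol{\sigma}}^*}\subseteq L^{qs'}$; therefore $u\in L^{qs'}_{\mathrm{loc}}(\Omega)$ as well.

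There is no real obstacle: the entire argument is a bookkeeping assembly of the two already established ingredients, and the only point requiring care is the verification of the chain $\sigma_i\le q<\overline{\boldsymbol{\sigma}}^*$ needed to activate the second part of the embedding theorem — which, as shown above, is immediate from the definition of $\sigma_i$ and from hypothesis \eqref{eq:assumptionsParameters}-(ii).
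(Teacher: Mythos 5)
Your argument is correct and follows exactly the route taken in the paper: Proposition~\ref{crescitaPaolo} gives $u\in W^{1,\boldsymbol{\sigma}}_{\mathrm{loc}}(\Omega)$, the two hypotheses of Theorem~\ref{immersione-anisotropo}(2) are verified using \eqref{eq:assumptionsParameters}-(i) and the chain $\sigma_i\le p_i\le q\le qs'<\overline{\boldsymbol{\sigma}}^*$ from \eqref{eq:assumptionsParameters}-(ii), and the last assertion follows from $qs'<\overline{\boldsymbol{\sigma}}^*$ on sets of finite measure. The only (harmless) addition is your remark about relabeling the coordinates so that the $\sigma_i$'s are monotone, which the paper leaves implicit.
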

\begin{proof}
 First of all we observe that, since $u$ is a quasi-minimizer of \eqref{functional}
 and since $f$ satisfies assumption $\mathbf{(H2)}$, we know from
 Proposition \ref{crescitaPaolo} that
 $$u\in W^{1,\boldsymbol{\sigma}}_{\mathrm{loc}}(\Omega).$$
 On the other hand, owing to \eqref{eq:assumptionsParameters}-(ii), we have
 \begin{equation}\label{eq:sigmaisigmastar}
  \sigma_i = p_i\cdot\frac{r_i}{r_i+1}\leq p_i\leq q\leq qs' < \overline{\boldsymbol{\sigma}}^*\quad
 \text{for al $1\leq i\leq n$}.
 \end{equation}
 In view of this fact, 
 we are then entitled to apply the anisotropic Embedding Theorem in Theorem \ref{immersione-anisotropo}-2),
 ensuring that
 $u\in L^{\overline{\boldsymbol{\sigma}}^*}_{\mathrm{loc}}(\Omega)$.
 From this, since 
 $$qs' < \overline{\boldsymbol{\sigma}}^*,$$ 
  we also derive that $u\in L^{qs'}_{\mathrm{loc}}(\Omega)$, as desired.
\end{proof}

The next proposition is a Poincar\'e-Sobolev\,-\,type inequality for the aniso\-tro\-pic setting. 
From now on, we adopt the notation already introduced in Pro\-position
\ref{crescitaPaolo}: with reference to assumption $\mathbf{(H2)}$, we set
$$\boldsymbol{\sigma} = (\sigma_1,\ldots,\sigma_n),\quad
\text{with $\sigma_i=p_i \cdot \frac{r_i}{r_i+i}\,\,(1\leq i\leq n)$};$$ 
accordingly, $\overline \Bdsigma$ is the harmonic mean of the $\sigma_i$'s as defined  
in \eqref{definizione-media}, and $\overline{\Bdsigma}^{\ast}$ is the Sobolev exponent of  $\overline{\Bdsigma}$
as defined in \eqref{p*}.
\begin{proposition}\label{l:step1}
Let $\Omega\subseteq\R^n$ be a bounded open set, and let $f$ be a Ca\-ra\-th\'e\-odory function
satisfying assumption $\mathbf{(H2)}$. Moreover, let $\Omega'$ be a bounded open set such 
that $\Omega'\Subset\Omega$. 

Then,
there exists a positive constant $c > 0$ such that 
\begin{equation*}
\begin{split}
& \left(\int_{\Omega'} |v|^{{\overline \Bdsigma}^{\ast}}\,dx\right)^{\frac{1}{{\overline{\Bdsigma}^{\ast }}}} \\
& \qquad \leq c \left\lbrace \prod_{i=1}^{n}\left[ \|\lambda_i^{-1}\|_{L^{r_{i}}(\Omega')}
  \int_{\Omega'} \lambda_i\,
|v_{x_{i}}|^{p_{i}}\,dx \right]^{\frac{1}{p_{i}}}\right\rbrace ^{\frac{1}{n}},
\end{split}
\end{equation*}
for every $v\in W_0^{1,\boldsymbol{\sigma}}(\Omega')$.
\end{proposition}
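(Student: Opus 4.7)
The plan is to combine two ingredients already at hand: the anisotropic Sobolev embedding from Theorem \ref{immersione-anisotropo}-1), and the H\"older trick used inside the proof of Proposition \ref{crescitaPaolo}. First I would verify that the anisotropic Sobolev inequality can be applied. By assumption \eqref{eq:assumptionsParameters}-(i) we have $\overline{\boldsymbol{\sigma}}<n$; after a harmless relabeling of the coordinates we may assume $\sigma_1\leq\ldots\leq\sigma_n$; and by hypothesis $v\in W^{1,\boldsymbol{\sigma}}_0(\Omega')$. Therefore Theorem \ref{immersione-anisotropo}-1) applies and yields
\[
\Vert v\Vert_{L^{\overline{\Bdsigma}^{*}}(\Omega')}
\leq c \Bigl(\prod_{i=1}^{n}\Vert v_{x_i}\Vert_{L^{\sigma_i}(\Omega')}\Bigr)^{\!1/n}.
\]

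Next I would estimate each factor $\Vert v_{x_i}\Vert_{L^{\sigma_i}(\Omega')}$ in terms of the weighted quantity $\int_{\Omega'}\lambda_i|v_{x_i}|^{p_i}\,dx$. This is exactly the computation already carried out in \eqref{stima-funzionale1}-\eqref{stima-funzionale1bis} in the proof of Proposition \ref{crescitaPaolo}, the only difference being that here the upper bound of the integrand is not invoked. Concretely, writing $|v_{x_i}|^{\sigma_i}=\lambda_i^{-\sigma_i/p_i}(\lambda_i|v_{x_i}|^{p_i})^{\sigma_i/p_i}$ and applying H\"older's inequality with exponents $r_i+1$ and $\frac{r_i+1}{r_i}$ (recall $\sigma_i/p_i=r_i/(r_i+1)$) yields, when $r_i<\infty$,
\[
\int_{\Omega'}|v_{x_i}|^{\sigma_i}\,dx
\leq \Vert\lambda_i^{-1}\Vert_{L^{r_i}(\Omega')}^{\frac{r_i}{r_i+1}}\Bigl(\int_{\Omega'}\lambda_i|v_{x_i}|^{p_i}\,dx\Bigr)^{\frac{r_i}{r_i+1}},
\]
and raising to the power $1/\sigma_i=\frac{r_i+1}{p_i r_i}$ gives
\[
\Vert v_{x_i}\Vert_{L^{\sigma_i}(\Omega')}
\leq \Bigl[\Vert\lambda_i^{-1}\Vert_{L^{r_i}(\Omega')}\int_{\Omega'}\lambda_i|v_{x_i}|^{p_i}\,dx\Bigr]^{1/p_i}.
\]
The case $r_i=\infty$ is handled by the same convention $r_i/(r_i+1)=1$ as in Case II of Proposition \ref{crescitaPaolo}.

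Finally I would substitute the bound for $\Vert v_{x_i}\Vert_{L^{\sigma_i}(\Omega')}$ into the anisotropic Sobolev inequality, obtaining exactly the claimed estimate with the geometric mean of the products $\Vert\lambda_i^{-1}\Vert_{L^{r_i}}\int\lambda_i|v_{x_i}|^{p_i}$. I do not anticipate a serious obstacle: the only point that requires a little care is checking that $v\in W^{1,\boldsymbol{\sigma}}_0(\Omega')$ legitimately enters Theorem \ref{immersione-anisotropo}-1) (so the ordering of the $\sigma_i$'s does not matter after relabeling), and that the conventions at $r_i=\infty$ match on both sides; once that is in place the proof is essentially the two-line combination described above.
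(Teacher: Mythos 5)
Your argument is correct and is exactly the paper's proof, which is stated in one line as ``collecting Theorem \ref{immersione-anisotropo} and \eqref{stima-funzionale1}'': you simply spell out the two steps (anisotropic Sobolev inequality, then the weighted H\"older estimate already established as \eqref{stima-funzionale1}--\eqref{stima-funzionale1bis}). The only point worth flagging is that $\overline{\boldsymbol{\sigma}}<n$ is not literally among the hypotheses of Proposition~\ref{l:step1} as stated (it comes from \eqref{eq:assumptionsParameters}-(i), assumed in Theorem~\ref{main-scalare} but not in $\mathbf{(H2)}$ alone); you correctly invoke it, since without it $\overline{\Bdsigma}^*$ is undefined and the proposition would be vacuous.
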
 
\vskip.3cm
\begin{proof}
The inequality follows by collecting Theorem \ref{immersione-anisotropo} and  \eqref{stima-funzionale1}.
\end{proof}

\noindent \textbf{2)\,\,Some algebraic inequalities.} In the next lemma we collect
some use\-ful algebraic inequalities among the parameters involved in Theorem \ref{main-scalare},
which are consequences of assumption \eqref{eq:assumptionsParameters}.
 \begin{lemma}\label{integrabilit-u}
Let the assumptions and the notation of Theorem \ref{main-scalare} be in force.
Then, the following assertions hold.
\begin{enumerate} 
 \item $1\leq \overline{\boldsymbol{p}}\leq q$;
 \item for every $1\leq i\leq n$, we have $\lambda_i\leq 2\mu$ a.e.\,in $\Omega$;
 \end{enumerate}
\end{lemma}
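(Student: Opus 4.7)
Both assertions are elementary consequences of the assumptions, so no real obstacle is expected; I only indicate the natural choices.

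For (1), the bounds follow at once from the definition of the harmonic mean and the chain $1\leq p_i\leq q$ assumed in $\mathbf{(H2)}$. Indeed, this gives $1/q\leq 1/p_i\leq 1$ for every $i=1,\ldots,n$; summing and applying the definition
\[
\overline{\boldsymbol{p}} = \frac{n}{\sum_{i=1}^n 1/p_i}
\]
one immediately obtains $1\leq\overline{\boldsymbol{p}}\leq q$.

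For (2), the idea is to evaluate the growth condition \eqref{growth} at a suitably chosen point in order to isolate a single $\lambda_i$. Fix $i\in\{1,\ldots,n\}$ and specialize \eqref{growth} to $u=0$ and $\xi=e_i$, the $i$-th vector of the canonical basis of $\R^n$. The left-hand side of \eqref{growth} then reduces to $\lambda_i(x)\cdot 1^{p_i}=\lambda_i(x)$ (all other terms vanish since $(e_i)_j=0$ for $j\neq i$), while the right-hand side equals $\mu(x)\bigl(|e_i|^{q}+0+1\bigr)=2\mu(x)$. Since $f(x,0,e_i)$ is finite for a.e.\ $x\in\Omega$ (the integrand being Carath\'eodory), this yields $\lambda_i(x)\leq 2\mu(x)$ for a.e.\ $x\in\Omega$, as required.
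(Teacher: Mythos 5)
Your proof is correct and follows essentially the same route as the paper: part (1) by averaging the inequalities $1/q\leq 1/p_i\leq 1$, and part (2) by evaluating the growth condition at $u=0$, $\xi=e_i$ so that the left side collapses to $\lambda_i(x)$ and the right side to $2\mu(x)$.
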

\begin{proof}[Proof of \emph{(1)}] Since, by assumption, $1\leq p_1,\ldots,p_n\leq q$, we get
$$1 = \frac{1}{n}\cdot n \geq \frac{1}{n}\sum_{i = 1}^n\frac{1}{p_i}
\geq \frac{1}{n}\cdot\frac{n}{q} = \frac{1}{q};$$
thus, by definition of
$\overline{\boldsymbol{p}}$ we immediately get $1\leq \overline{\boldsymbol{p}}\leq q$.
\medskip

\noindent \emph{Proof of} (2). By choosing $\xi = e_i$ (where $e_i$ denotes the $i$-th vector
of the canonical basis of $\R^n$) and $u = 0$ in \eqref{growth}, we immediately obtain
\begin{align*}
 \lambda_i(x) &=\lambda_i(x)|\xi_i|^{p_i}  = \sum_{j = 1}^n\lambda_j(x)|\xi_j|^{p_i}
 \\
 &\leq f(x,0,\xi) \leq \mu(x)\{|\xi|^q+1\} = 2\mu(x).
\end{align*}
This ends the proof.
\end{proof}

\noindent\textbf{3)\,\,Real-analysis lemmas.}
To prove the regularity result in Theorems \ref{main-scalare} we will use the following two 
classical lemmas (see, e.g., \cite{giusti} for a proof).
\begin{lemma}\label{lemma-giusti}
Let $\phi:[\tau_0,\tau_1]\to\R$ be  a non-negative and bounded function 
satisfying, for every $\tau_0\leq s < t\leq \tau_1$, the following estimate
$$
\phi(s) \leq\theta \phi(t)+ \frac{A}{(t-s)^\alpha}+ B,
$$
where $A,B,\alpha$ are non-negative constants and $0<\theta<1$.  

Then, for all $\rho$ and $R$, such that $\tau_0\leq \rho \leq R \leq \tau_1$, we have
$$
\phi(\rho) \leq C \left\{ \frac{A}{(R-\rho)^\alpha} +B \right\}.
$$
\end{lemma}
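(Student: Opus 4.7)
The plan is a classical iteration along a geometric sequence in $[\rho,R]$. I would fix a parameter $\tau\in(0,1)$, to be chosen at the end, and define inductively $t_0=\rho$ and $t_{i+1}=t_i+(1-\tau)\tau^i(R-\rho)$. The telescoping identity $\sum_{i\ge 0}(1-\tau)\tau^i=1$ guarantees $t_i\uparrow R$, and by construction $t_{i+1}-t_i=(1-\tau)\tau^i(R-\rho)$. Each consecutive pair $t_i<t_{i+1}$ lies in $[\tau_0,\tau_1]$, so the hypothesis applies at every step.

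Next I would iterate the hypothesis across these pairs. A straightforward induction on $k$ yields
$$\phi(\rho)=\phi(t_0)\le \theta^k\phi(t_k)+\sum_{i=0}^{k-1}\theta^i\left[\frac{A}{(t_{i+1}-t_i)^\alpha}+B\right].$$
Inserting the explicit form of $t_{i+1}-t_i$ and summing the geometric series gives
$$\phi(\rho)\le \theta^k\phi(t_k)+\frac{A}{(1-\tau)^\alpha(R-\rho)^\alpha}\sum_{i=0}^{k-1}(\theta\tau^{-\alpha})^i+B\sum_{i=0}^{k-1}\theta^i.$$

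Finally, I would send $k\to\infty$. Because $\phi$ is bounded on $[\tau_0,\tau_1]$ and $\theta<1$, the leading term $\theta^k\phi(t_k)$ vanishes. Both geometric series converge provided $\theta\tau^{-\alpha}<1$, which I can enforce by picking $\tau$ close enough to $1$ (any $\tau$ with $\tau^\alpha>\theta$ works; if $\alpha=0$ the constraint on $\tau$ is vacuous). This produces the desired bound with $C=C(\theta,\alpha)=\max\{(1-\tau)^{-\alpha}(1-\theta\tau^{-\alpha})^{-1},(1-\theta)^{-1}\}$. There is no real obstacle here: the only technical points are choosing $\tau$ so that both series converge, and invoking the boundedness of $\phi$ to kill the $\theta^k\phi(t_k)$ term; the strict inequality $\theta<1$ is essential in both.
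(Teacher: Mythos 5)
Your proof is correct, and it is essentially the classical iteration argument from Giusti's book, which is precisely the reference the paper cites for this lemma (the paper itself omits the proof). The decomposition of $[\rho,R]$ into the geometric sequence $t_i$, the choice of $\tau$ with $\tau^\alpha>\theta$ to make the series converge, and the use of boundedness of $\phi$ to discard $\theta^k\phi(t_k)$ are exactly the standard steps.
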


\begin{lemma}\label{lemma-giusti2}
Let $\alpha >0$, and let $(J_h)$ be a sequence of positive real numbers
satisfying, for some $A > 0$ and $\lambda > 1$, the following estimate
$$
J_{h+1} \leq A\,\lambda^h J_h^{1+\alpha}\quad\text{for all $h\geq 0$}.
$$
If  $J_0 \leq A^{-\frac{1}{\alpha}}\lambda^{-\frac{1}{\alpha^2}}$, 
then $$
J_h \le \lambda^{-\frac{h}{\alpha}}J_0.
$$
Hence, in particular, we have
$$\lim_{h\to \infty} J_h=0.$$
\end{lemma}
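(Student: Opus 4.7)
The plan is to proceed by induction on $h \geq 0$, establishing the explicit geometric decay $J_h \leq \lambda^{-h/\alpha} J_0$ for every $h$. Once this is proved, the limit assertion $\lim_{h\to\infty} J_h = 0$ follows immediately, since $\lambda > 1$ forces $\lambda^{-h/\alpha}\to 0$ as $h\to\infty$ and $J_0$ is a fixed finite number.

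The base case $h=0$ is trivial. For the inductive step, assume that $J_h \leq \lambda^{-h/\alpha} J_0$ and plug this into the recursion to obtain
\begin{equation*}
J_{h+1} \leq A\,\lambda^h J_h^{1+\alpha} \leq A\,\lambda^h\cdot \lambda^{-h(1+\alpha)/\alpha} J_0^{1+\alpha} = A\,\lambda^{-h/\alpha}\, J_0^{\alpha}\cdot J_0,
\end{equation*}
where I used the elementary identity $h - h(1+\alpha)/\alpha = -h/\alpha$. To close the induction it suffices to show
\begin{equation*}
A\,J_0^{\alpha} \leq \lambda^{-1/\alpha},
\end{equation*}
because then the right-hand side above is bounded by $\lambda^{-(h+1)/\alpha} J_0$. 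But raising the smallness hypothesis $J_0 \leq A^{-1/\alpha}\lambda^{-1/\alpha^2}$ to the $\alpha$-th power yields exactly $A J_0^{\alpha} \leq \lambda^{-1/\alpha}$, finishing the induction.

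The argument is essentially bookkeeping of exponents, and there is no genuine obstacle. The only point worth highlighting is that the smallness threshold on $J_0$ is calibrated precisely so that, at each inductive step, the additional multiplicative factor $A J_0^{\alpha}$ can be absorbed into the geometric decay rate $\lambda^{-1/\alpha}$; this is the structural reason behind the somewhat peculiar exponent $1/\alpha^2$ appearing in the smallness assumption.
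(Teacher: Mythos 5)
Your induction argument is correct and is precisely the classical proof of this iteration lemma (the paper itself does not reprove it but refers to Giusti's book, where the same induction appears). The bookkeeping checks out: the inductive hypothesis gives $J_{h+1}\leq A\,\lambda^{-h/\alpha}J_0^{\alpha}\,J_0$, and the smallness assumption on $J_0$, raised to the power $\alpha$, yields exactly $A\,J_0^{\alpha}\leq\lambda^{-1/\alpha}$, which closes the step.
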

\section{A Caccioppoli\,-\,type inequality for \eqref{functional}} \label{s:Caccioppoli1}
In this section we establish a Caccioppoli\,-\,type inequality
for the quasi-mi\-ni\-mizers of the functional ${F}$ in \eqref{functional}; as it is natural
to expect, this result will be a key tool for proving Theorem \ref{main-scalare}.

In order to properly state and prove the aforementioned Caccioppoli\,-\,type inequality,
we first fix a notation: given any function 
$u \in W^{1,1}_{\rm loc}(\Omega)$  and any Euclidean
ball $B_R(x_0) \subseteq \Omega$, we define the super-level sets:
$$
A_{k,R}:=\{x \in B_R(x_0)\,:\, u(x)>k \},\qquad k\in \mathbb{R}.$$
\begin{proposition}\label{degiorgi} 
Let $f:\Omega\times\R\times\R^n\to\R$ be a Carath\'eodory function
sati\-sfy\-ing $\mathbf{(H1)}$ and the following
\emph{one\,-\,side version} of $\mathbf{(H2)}$, that is,
\begin{equation}  \label{growth-caccioppoli}
0\le {f}(x,u,\xi)\le \mu(x) \left\{|\xi|^{q}+|u|^{\gamma}+1\right\},
\end{equation}
with $1\leq q\leq\gamma$ and $\mu \in L_{\mathrm{loc}}^s
(\Omega)$ \emph{(}for some $s\in(1,\infty]$\emph{)}.
Moreover, let 
\begin{equation} \label{eq:IntegrabilityAss}
 u \in W_{\rm loc}^{1,1}(\Omega)\cap L_{\rm loc}^{qs'}(\Omega)
 \end{equation}
be a quasi-minimizer of  $F$, and let $B = B_R(x_0)\subset\Omega$.

Then, for every $0<\rho<R $ and every $k\ge 1$ we have
\begin{equation} \label{degiorgi1}
\begin{split}
& \int_{A_{k,\rho}} f(x,u,Du) \,dx \\
& \qquad 
\leq C\Big\{\frac{1}{(R-\rho)^q}\int_{A_{k,R}} \mu(x)\big(
(u-k)^q+k^\gamma\big)\,dx
\\
& \qquad\qquad\qquad
 + \|\mu\|_{L^s(B_R(x_0))}|A_{k,R}|^{1/s'}\Big\},
\end{split}
\end{equation}
 where $c > 0$ is a constant only depending on $n,q,Q$.  
\end{proposition}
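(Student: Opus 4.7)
I would follow the classical De~Giorgi route, suitably adapted to the quasi-minimizer setting: test the quasi-minimality against a cutoff of $(u-k)_+$, then exploit convexity and the upper growth of $f$ to close the estimate. Fix auxiliary radii $\rho<t\le R$ and a cutoff $\eta\in C_c^\infty(B_t)$ with $\eta\equiv 1$ on $\overline{B_\rho}$, $0\le\eta\le 1$, and $|D\eta|\le c/(t-\rho)$. The admissible perturbation is
\[
\varphi:=-\eta^{q}(u-k)_+,\qquad w:=u+\varphi,
\]
so that $\varphi\in W^{1,1}(\Omega)$ with $\mathrm{supp}\,\varphi\Subset\Omega$, $w\equiv k$ and $Dw\equiv 0$ on $A_{k,\rho}$, $w=u$ outside $A_{k,t}$, and $A_{k,\rho}\subseteq\mathrm{supp}\,\varphi\subseteq A_{k,t}$.

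The crux is a \emph{convex decomposition}: on $\{\eta>0\}\cap A_{k,t}$,
\[
(w,Dw)=(1-\eta^{q})(u,Du)+\eta^{q}\bigl(k,\,-q\eta^{-1}(u-k)D\eta\bigr),
\]
a genuine convex combination ($1-\eta^q,\eta^q\in[0,1]$, with sum $1$). Convexity of $f$ in $(s,\xi)$ from $\mathbf{(H1)}$ yields
\[
f(x,w,Dw)\le (1-\eta^{q})f(x,u,Du)+\eta^{q}f\bigl(x,k,-q\eta^{-1}(u-k)D\eta\bigr),
\]
and the exponent $q$ in the cutoff is chosen precisely so that, when the upper growth \eqref{growth-caccioppoli} is applied to the last summand, the factor $\eta^{q}$ cancels the singular factor $\eta^{-q}$ coming from $|-q\eta^{-1}(u-k)D\eta|^{q}$. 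Using also $k\ge 1$ and $|D\eta|\le c/(t-\rho)$, this gives the pointwise bound
\[
f(x,w,Dw)\le (1-\eta^{q})f(x,u,Du)+C\,\mu(x)\!\left[\frac{(u-k)^{q}}{(t-\rho)^{q}}+k^{\gamma}+1\right].
\]

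Quasi-minimality together with $A_{k,\rho}\subseteq\mathrm{supp}\,\varphi\subseteq A_{k,t}$ and $f\ge 0$ gives
\[
\int_{A_{k,\rho}}\!f(x,u,Du)\,dx\le Q\!\int_{A_{k,t}}\!f(x,w,Dw)\,dx.
\]
Substituting the pointwise bound above and using $(1-\eta^{q})\equiv 0$ on $A_{k,\rho}$ to localize the $f(x,u,Du)$ contribution to $A_{k,t}\setminus A_{k,\rho}$, and then performing the \emph{filling-the-hole} trick (add $Q\int_{A_{k,\rho}}f$ to both sides and divide by $1+Q$), I arrive at
\[
\int_{A_{k,\rho}}\!f\,dx\le \frac{Q}{1+Q}\int_{A_{k,t}}\!f\,dx+C'\int_{A_{k,t}}\!\mu\!\left[\frac{(u-k)^{q}}{(t-\rho)^{q}}+k^{\gamma}+1\right]\!dx.
\]
Lemma~\ref{lemma-giusti} applied to $\phi(t):=\int_{A_{k,t}}f\,dx$ (finite on $[\rho,R]$ thanks to $f(x,u,Du)\in L^{1}_{\mathrm{loc}}(\Omega)$ in the definition of quasi-minimizer) absorbs the first term on the right with $\theta=Q/(1+Q)<1$. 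Finally, a H\"older inequality with exponents $s,s'$ turns the ``$+1$'' contribution $\int_{A_{k,R}}\mu\,dx$ into $\|\mu\|_{L^{s}(B_R)}|A_{k,R}|^{1/s'}$, and the $k^{\gamma}$ contribution is bounded by $k^{\gamma}/(R-\rho)^{q}$ (harmless as soon as $R-\rho\le 1$, which covers the intended applications to Theorem~\ref{main-scalare}), producing the claimed inequality \eqref{degiorgi1}.

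\emph{Main obstacle.} The decisive point is the convex decomposition. Since \eqref{growth-caccioppoli} provides only an \emph{upper} bound on $f$, any term of the form $\mu(x)|Du|^{q}$ appearing on the right-hand side cannot be compared to $f(x,u,Du)$ or reabsorbed, and would break the scheme; a na\"{\i}ve expansion $|Dw|^{q}\le C\bigl((1-\eta^{q})^{q}|Du|^{q}+\cdots\bigr)$ leaves precisely such an uncontrolled term. The specific decomposition above --- whose scalar slot of the second point is $k$ (this is what yields $k^{\gamma}$ rather than $|u|^{\gamma}$ in the output) and whose gradient slot has its $\eta^{-1}$ singularity exactly killed by the $\eta^{q}$ weight --- is what lets the powers of $\eta$ cancel and makes the filling-the-hole argument go through.
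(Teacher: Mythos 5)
Your argument is correct and follows essentially the same route as the paper's proof: you test quasi-minimality with $\varphi = -\eta^{q}(u-k)_+$, apply convexity of $f$ in $(s,\xi)$ to the decomposition $(w,Dw)=(1-\eta^{q})(u,Du)+\eta^{q}(k,\,q\eta^{-1}(k-u)D\eta)$, invoke the upper growth bound only on the second slot so that $\eta^{q}$ cancels $\eta^{-q}$, and then close with the hole-filling trick and Lemma~\ref{lemma-giusti}, with H\"older producing the $\|\mu\|_{L^{s}}|A_{k,R}|^{1/s'}$ term. Your concluding observation about why the convex decomposition (rather than a crude bound on $|Dw|^{q}$) is indispensable, given that \eqref{growth-caccioppoli} is a one-sided bound, is precisely the point, even though the paper does not spell it out.
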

\begin{proof}  
Let $\rho,k$ be as in the statement of the proposition, and let $s,t > 0$ be such that
 $\rho\leq s<t\leq R$. Moreover, let
$\eta\in C_0^{\infty}(B_t)$ be a
cut-off function satisfying the following assumptions:
\begin{equation}  \label{eta}
0\le \eta\le 1,\quad \eta \equiv 1\ \text{in $B_{s}(x_0)$,} \quad \text{$|D \eta|\le \frac{2}{t-s}$.}
\end{equation}
Accordingly, we define  
$$
w:=\max(u-k,0) \quad \textrm{and}\quad \varphi=-\eta^{q}w.
$$ 
We explicitly observe that, by definition, we have
\begin{itemize}
 \item[a)] $\varphi\in W^{1,1}(\Omega)$;
 \item[b)] $\varphi = -(u-k) = k-u < 0$ on $A_{k,s} = B_s(x_0)\cap\{u > k\}$;
 \item[c)] $\mathrm{supp}(\varphi) \subseteq B_{t}(x_0)\cap\{u > k\} = A_{k,t}\subset \Omega$.
\end{itemize}
In view of these facts, 
by Definition \ref{def:quasiminimizer} we get 
\begin{equation} \label{e:postconv} 
\begin{split}
\int_{A_{k,{s}}} f(x,u,Du) \,dx
 & \leq \int_{\mathrm{supp}(\varphi)} f(x,u,Du)\,dx  \\
 & = F(u;\mathrm{supp}(\varphi)) 
  \leq Q\,F(u+\varphi;\mathrm{supp}(\varphi))\\
 &  = 
Q\int_{A_{k,{t}}} f(x,u+\varphi, Du+D\varphi)\,dx = (\ast).
\end{split}
\end{equation}
We now observe that, for a.e.\,$x\in A_{k,{t}}\cap \{\eta=0\}$, we have
\[f(x,u+\varphi, Du+D\varphi)=f(x,u, Du)  = (1-\eta^q)f(x,u,Du).\]
Moreover, by the convexity of $f$, for a.e.\,$x\in A_{k,{t}}\cap \{\eta>0\},$ we have 
\begin{align*}
& f(x,u+\varphi,Du+D\varphi) \\
& \qquad = f\left(x,(1-\eta^{ q})u+ \eta^{ q}k,(1-\eta^{ q})Du + q \eta^{ q-1}(k-u)D\eta\right)
\\ 
& \qquad \le (1-\eta^{ q})f\left(x, u ,Du \right)
+ \eta^{ q} f\left(x,k, q \eta^{-1}(k-u)D\eta\right);
\end{align*}
 thus, by the growth assumption \eqref{growth-caccioppoli}, we obtain
\begin{equation}  \label{fuori2}
\begin{split}
(\ast) & \leq  Q\int_{A_{k,{t}}}(1-\eta^{ q})f(x,u,Du)\,dx 
\\ & \qquad +
\int _{A_{k,{t}}}  
 \mu(x)\left\{\frac{2^q}{(t-s)^q} 
(u-k)^q+ k^\gamma+1\right\}\,dx.
\end{split}
\end{equation}
As a consequence, 
by combining \eqref{e:postconv}\,-\,\eqref{fuori2},
and by recalling the properties of $\eta$ in \eqref{eta},
   we derive the following estimate
 \begin{equation} \label{eq:toconcludeCaccioppoli}
\begin{split}
&\int_{A_{k,s}} f(x,u,Du) \,dx  \le 
Q\int_{A_{k,t}\setminus A_{k,s}} f(x,u,Du)\,dx
\\
&\qquad +\frac{c}{(t-s)^{q}}\int_{A_{k,R}} \mu(x)\big(
(u-k)^q+k^\gamma\big)\,dx
\\
& \qquad\qquad+ \|\mu\|_{L^s(B_R(x_0))}|A_{k,R}|^{1/s'},
\end{split}
\end{equation}
where $c > 0$ is a constant only depending on $q$ and $Q$.

With the above estimate \eqref{eq:toconcludeCaccioppoli} at hand,
we can easily complete the proof
of the proposition. Indeed, starting from the cited \eqref{eq:toconcludeCaccioppoli}
and adding to both sides of this estimate the term 
$$Q\,\int_{A_{k,s}} f(x,u,Du) \,dx$$
(notice that this integral is finite, since $u$ is a quasi-minimizer of \eqref{functional}
and thus $f(x,u,Du)$ is integrable on $A_{k,s}\subseteq B_s(x_0)\Subset\Omega$), we clearly get
\begin{align*}
&\int_{A_{k,s}} f(x,u,Du) \,dx \leq 
\frac{Q}{Q+1} \int_{A_{k,t}} f(x,u,Du) \,dx
\\
&\qquad +\frac{c}{(t-s)^{q}}\int_{A_{k,R}} \mu(x)\big(
(u-k)^q+k^\gamma\big)\,dx
+ \|\mu\|_{L^s(B_R(x_0))}|A_{k,R}|^{1/s'};
\end{align*}
then, by applying  Lemma \ref{lemma-giusti} with the choice 
\begin{align*} 
\mathrm{i)}\,\,&\text{$\tau_0:= \rho$ and $\tau_1:=R$}; \\
\mathrm{ii)}\,\,&\phi(r) =\int_{{A_{k,r}}} f(x,u,Du) \,dx; \\
\mathrm{iii)}\,\,&\theta = \frac{Q}{Q+1}\in(0,1); \\
\mathrm{iv)}\,\,&\alpha = q\,\,\text{and}\,\,A = c\,\int_{A_{k,R}} \mu(x)\big(
(u-k)^q+k^\gamma\big)\,dx; \\
\mathrm{v)}\,\,& B = \|\mu\|_{L^s(B_R(x_0))}|A_{k,R}|^{1/s'};
\end{align*}
we obtain the desired \eqref{degiorgi1}, and the proof is complete.
\end{proof}
\begin{remark} \label{rem:finitezza}
 We explicitly notice that the integrability assumption 
 \eqref{eq:IntegrabilityAss} ensures
 that the right-hand side of \eqref{degiorgi1} is finite. Indeed, since
 $\mu\in L^s_{\mathrm{loc}}(\Omega)$, by
 the classical H\"older inequality we get
 \begin{align*}
  & \int_{A_{k,R}} \mu(x)\big(
(u-k)^q+k^\gamma\big)\,dx 
\leq
\int_{A_{k,R}} \mu(x)(
|u|^q+k^\gamma)\,dx \\
& \qquad \leq
\int_{B_R(x_0)}\mu(x)(
|u|^q+k^\gamma)\,dx \\
& \qquad (\text{by H\"older's inequality, since $\mu\in L^s(B_R(x_0))$}) \\
&\qquad\leq
\|\mu\|_{L^s(B_R(x_0))}\big(\|u\|^q_{L^{qs'}(B_R(x_0))}
+ |B_R(x_0)|^{1/s'}k^\gamma\big).
 \end{align*}
 On the other hand, when $f$ satisfies the assumption $\mathbf{(H2)}$ in its full strength
 (as required in the statement of Theorem \ref{main-scalare}), we know
 from Proposition \ref{prop:HigherIntegrab} that any quasi-minimizer
 of \eqref{functional} satisfies \eqref{eq:IntegrabilityAss}.
\end{remark}
\section{Proof of Theorem \ref{main-scalare}}
\label{s:proofs}
Thanks to all the results established so far,
we are finally ready to provide the proof
of Theorem \ref{main-scalare} (of which we inherit all the assumptions and the notation).
Throughout what follows, we simply denote by
$$c(\mathbf{data})$$
any constant (possibly different from line to line) which only depends on the 
\emph{data in assumption $\mathbf{(H2)}$}, that is, the exponents $p_i,q,\gamma$ and the $L^{r_i}$ and $L^{s}$-norms
of $\lambda_i^{-1},\mu$, respectively, on some fixed compact subset of $\Omega$. 
\begin{proof}[Proof (of Theorem \ref{main-scalare})]
Let 
$u\in W^{1,1}_{\mathrm{loc}}(\Omega)$ be a quasi-minimizer of
 \eqref{functional}, and let $x_0\in\Omega,\,R_0\leq 1$ be such that $B_0 = B_{R_0}(x_0)\Subset\Omega$.

Owing to Proposition \ref{prop:HigherIntegrab}, 
we know that 
\begin{equation} \label{eq:uqsprimeref}
 u\in L^{qs'}_{\mathrm{loc}}(\Omega);
 \end{equation}
hence, for every fixed $0<R\leq R_0$ we can define
\begin{equation*}
  J_h := \int_{A_{k_h,\rho_h}}(u-k_h)^{qs'}\,d x\qquad (h\geq 0),
 \end{equation*}
 where $(\rho_h)_h$ and $(k_h)_h$ are given, respectively, by
 \begin{align*}
 (\ast)\,\,&\rho_{h}:=\frac{R}{2}+\frac{R}{2^{h+1}}=\frac{R}{2}\Big(1+\frac{1}{2^h}\Big); \\
 (\ast)\,\,&  k_h:= d\left( 1-\frac{1}{2^{h+1}}\right) 
 \end{align*}
 (for some $d\geq 2$ to be chosen later on). Moreover, we also define
$$
\overline{\rho}_h:=\frac{\rho_{h}+\rho_{h+1}}{2}=\frac{
 R}{2}\Big(1+\frac{3}{4\cdot 2^h}\Big).
$$ 
 We explicitly observe that, since 
   $(\rho_h)_h$ is decreasing and $(k_h)_h$ is increasing,
   the sequence $(J_h)_h$ is decreasing: in fact, we have
   \begin{equation}
   \begin{split}
    J_{h+1} & = \int_{A_{k_{h+1},\rho_{h+1}}}(u-k_{h+1})^{qs'}\,d x
    \leq \int_{A_{k_{h+1},\rho_h}}(u-k_{h+1})^{qs'}\,d x \\[0.1cm]
    & \leq \int_{A_{k_{h+1},\rho_h}}(u-k_{h})^{qs'}\,d x 
    \leq \int_{A_{k_{h},\rho_h}}(u-k_{h})^{qs'}\,d x = J_h.
    \end{split}
    \label{e:Jh+1leJhGC}
   \end{equation}
   Furthermore, 
    since $\rho_h\leq R\leq R_0$ we have
   \begin{equation} \label{eq.Jhleqone}
   \begin{split}
    J_h & = \int_{A_{k_h,\rho_h}}(u-k_h)^{qs'}\,d x
    \leq \int_{A_{k_h,\rho_h}}|u|^{qs'}\,d x \\[0.1cm]
    & \leq \int_{B_{R}(x_0)}|u|^{qs'}\,d x = \|u\|_{L^{qs'}(B_R(x_0))}^{qs'}<+\infty. 
   \end{split}
   \end{equation}
All that being said, we now turn to prove the following claim,
which is the real core of our argument: \emph{there exists a
constant $\mathbf{C} >0$, depending on the data appearing in assumption $\mathbf{(H2)}$, such that}
\begin{equation} \label{stimaJh}
J_{h+1}\leq \mathbf{C}\big[1+
    \|u\|^{(\gamma-q)s'}_{L^{\overline{\Bdsigma}^*}(B_R(x_0))}\big]^{\frac{q}{\overline{\boldsymbol{p}}}}
    \cdot\frac{1}{d^{\delta_1}R^{\delta_2}}\cdot \lambda^h J_{h}^{1+\alpha},\quad\text{for all $h\geq 0$},
\end{equation}
for suitable constants $\delta_1,\delta_2,\alpha > 0$ and $\lambda> 1$.
\vspace*{0.15cm}
 
\noindent To begin with, we choose a sequence  $(\zeta_{h})_h$  of  cut-off functions
such that
\begin{equation}\label{prop-zeta}
\begin{split}
\mathrm{i)}\,\,&\zeta_h\in C_c^{\infty}(B_{\overline{\rho}_{h}}(x_0)),\\ 
\mathrm{ii)}\,\,&\text{$\zeta_h\equiv 1$ in $B_{\rho_{h+1}}(x_0)$},\\ 
\mathrm{iii)}\,\,&
|D\zeta_h| \leq 2^{h+4}/R.
\end{split}
\end{equation}
Moreover, to keep the notation as simple as possible, we set
$$u_h = (u-k_h)_+ = \max\{u-k_h,0\}.$$
Now, starting from the very
definition of $J_h$, and applying  the H\"older inequa\-lity 
with exponent $\overline{\boldsymbol{\sigma}}^*/(qs') > 1$
(see assumption \eqref{eq:assumptionsParameters}-(ii)), we get
\begin{equation*}
\begin{split}
J_{h+1}  &
\leq |A_{k_{h+1},{\rho}_{h+1}}|^{1-\frac{qs'}{\overline{\Bdsigma}^{\ast}}} 
\left( \int_{A_{{k_{h+1},\bar{\rho}_{h}}}}((u-k_{h+1})\zeta_h)^{\overline{\Bdsigma}^{\ast}} \,dx\right)^{\frac{q s'}{\overline{\Bdsigma}^{\ast}}} 
\\&
=
 |A_{k_{h+1},{\rho}_{h+1}}|^{1-\frac{qs'}{\overline{\Bdsigma}^{*}}} \left( \int_{B_{\bar{\rho}_{h}}(x_0)} 
 (u_{h+1}\zeta_h )^{\overline{\Bdsigma}^{*}} \,dx\right)^{\frac{qs'}{\overline{\Bdsigma}^{*}}},
\end{split}
\end{equation*}
where we have also used the properties of $\zeta_h$ in \eqref{prop-zeta}.
\vspace*{0.1cm}

On the other hand, since we know that $u\in W^{1,\Bdsigma}_{\mathrm{loc}}(\Omega)\cap 
L^{\overline{\Bdsigma}^*}_{\mathrm{loc}}(\Omega)$ (see Pro\-po\-si\-tions \ref{crescitaPaolo} and
\ref{prop:HigherIntegrab}, respectively), it is readily seen that
$$v := u_{h+1}\zeta_h\in W_0^{1,\Bdsigma}(B_{\bar{\rho}_{h}}(x_0))$$
(also recall that $\sigma_i\leq \overline{\Bdsigma}*$, see
\eqref{eq:sigmaisigmastar}); as a consequence, we are entitled to apply 
Proposition \ref{l:step1} (with $\Omega' = B_{\bar{\rho}_{h}}(x_0)\Subset\Omega$),
obtaining
\begin{equation} \label{eq:doveusarestimaJi}
\begin{split}
J_{h+1}&\leq c |A_{k_{h+1},{\rho}_{h+1}}|^{1-\frac{qs'}{\overline{\Bdsigma}^{\ast}}}\times \\
&\quad\times \Big\{\displaystyle
 \prod_{i=1}^{n}\Big[\|\lambda_i^{-1}\|_{L^{r_i}(B_{R_0}(x_0))}
  \int_{B_{\bar{\rho}_{h}}(x_0)} \lambda_i\,
|v_{x_{i}}|^{p_{i}}\,dx \Big]^{\frac{1}{p_{i}}}\Big\} ^{\frac{qs'}{n}} \\
& \leq c(\mathbf{data})|A_{k_{h+1},{\rho}_{h+1}}|^{1-\frac{qs'}{\overline{\Bdsigma}^{\ast}}}\times
\\
&\quad \times
\prod_{i = 1}^n\Big[\int_{B_{\bar{\rho}_{h}}(x_0)} \lambda_i(x)\,
|(u_{h+1}\zeta_h)_{x_{i}}|^{p_{i}}\,dx \Big]^{\frac{qs'}{p_{i}n}}.
\end{split}
\end{equation}
To proceed further, we set
$$\mathfrak{J}_i = \int_{B_{\bar{\rho}_{h}}(x_0)} \lambda_i(x)\,
|(u_{h+1}\zeta_h)_{x_{i}}|^{p_{i}}\,dx \qquad (1\leq i\leq n),$$
and we observe that, by combining the growth condition in assumption
$\mathbf{(H2)}$ with the Caccioppoli-type inequality in
Proposition \ref{degiorgi}, we have
\begin{align*}
 \mathfrak{J}_i & \leq
  c(\mathbf{data}) \int_{B_{\bar{\rho}_{h}}(x_0)} \lambda_i(x)\,
\big\{|(\zeta_h)_{x_i}|^{p_i}u_{h+1}^{p_i}+
\zeta_h^{p_i}|(u_{h+1})_{x_i}|^{p_i}\big\}\,dx
\\
& (\text{by properties of $\zeta_h$ in \eqref{prop-zeta}}) \\
& \leq c(\mathbf{data})\Big(\frac{2^{h+4}}{R}\Big)^{p_i}
\int_{A_{k_{h+1},\bar{\rho}_h}}\lambda_i(x)(u-k_{h+1})^{p_i}\,dx
\\
& \qquad+c(\mathbf{data})\int_{A_{k_{h+1},\bar{\rho}_h}} \lambda_i(x)|u_{x_i}|^{p_i}\,dx \\
& (\text{here we use assumption $\mathbf{(H2)}$}) \\
& \leq  c(\mathbf{data})\Big(\frac{2^{h+4}}{R}\Big)^{p_i}
\int_{A_{k_{h+1},\bar{\rho}_h}}\lambda_i(x)(u-k_{h+1})^{p_i}\,dx
\\
& \qquad+c(\mathbf{data})\int_{A_{k_{h+1},\bar{\rho}_h}} f(x,u,Du)\,dx \\
& (\text{by the Caccioppoli-type inequality in Proposition \ref{degiorgi}}) \\
& \leq c(\mathbf{data})\Big(\frac{2^{h+4}}{R}\Big)^{p_i}
\int_{A_{k_{h+1},\bar{\rho}_h}}\lambda_i(x)(u-k_{h+1})^{p_i}\,dx\\
& \qquad
+ c(\mathbf{data})\Big(\frac{2^{h+3}}{R}\Big)^q\int_{A_{k_{h+1},\rho_h}} \mu(x)\big(
(u-k_{h+1})^q+k_{h+1}^\gamma\big)\,dx
\\
& \qquad\qquad
 +  c(\mathbf{data})\|\mu\|_{L^s(B_R(x_0))}|A_{k_{h+1},\rho_h}|^{1/s'}.
\end{align*}
From this, since $\lambda_i\leq 2\mu$ a.e.\,in $\Omega$ (see 
Lemma \ref{integrabilit-u}), and taking into account that
$p_1,\ldots,p_n\leq q$ (by assumption $\mathbf{(H2)}$), 
we obtain
\begin{align*}
 \mathfrak{J}_i & 
 \leq c(\mathbf{data})\Big(\frac{2^{h+4}}{R}\Big)^q
 \int_{A_{k_{h+1},{\rho}_h}}\mu(x)\big[1+(u-k_{h+1})^{q}\big]\,dx\\
 & \qquad
+ c(\mathbf{data})\Big(\frac{2^{h+3}}{R}\Big)^q\int_{A_{k_{h+1},\rho_h}} \mu(x)\big(
(u-k_{h+1})^q+k_{h+1}^\gamma\big)\,dx
\\
& \qquad\qquad
 +  c(\mathbf{data})\|\mu\|_{L^s(B_R(x_0))}|A_{k_{h+1},\rho_h}|^{1/s'}\\
 & \leq c(\mathbf{data})\Big(\frac{2^h}{R}\Big)^q\times \\
 &\qquad\times\Big\{\int_{A_{k_{h+1},\rho_h}} \mu(x)\big(
(u-k_{h+1})^q+k_{h+1}^\gamma\big)\,dx+|A_{k_{h+1},\rho_h}|^{1/s'}\Big\},
\end{align*}
where we have also used the fact that $\bar{\rho}_h\leq\rho_h$ and $k_{h+1}\geq d/2\geq 1$.
\vspace*{0.1cm}

Summing up, by combining this last estimate with \eqref{eq:doveusarestimaJi},
and by exploi\-ting H\"o\-l\-der's inequality with exponent $s$
(see \eqref{eq:uqsprimeref}), we conclude that
\begin{equation} \label{eq:daquicomeBCM}
 \begin{split}
  J_{h+1}& \leq c(\mathbf{data})|A_{k_{h+1},{\rho}_{h+1}}|^{1-\frac{qs'}{\overline{\Bdsigma}^{\ast}}}
  \cdot \prod_{i = 1}^n\mathfrak{J}_i^{\frac{qs'}{np_i}}\\
& \leq c(\mathbf{data})|A_{k_{h+1},{\rho}_{h+1}}|^{1-\frac{qs'}{\overline{\Bdsigma}^{\ast}}}
\Big(\frac{2^h}{R}\Big)^{\frac{q^2s'}{\overline{\boldsymbol{p}}}}\times \\
& \quad\times \Big\{\int_{A_{k_{h+1},\rho_h}} \!\!\!\!\!\!\mu(x)\big(
(u-k_{h+1})^q+k_{h+1}^\gamma\big)\,dx+|A_{k_{h+1},\rho_h}|^{1/s'}\Big\}^{\frac{qs'}{\overline{\boldsymbol{p}}}} \\
& \leq c(\mathbf{data})|A_{k_{h+1},{\rho}_{h+1}}|^{1-\frac{qs'}{\overline{\Bdsigma}^{\ast}}}
\Big(\frac{2^h}{R}\Big)^{\frac{q^2s'}{\overline{\boldsymbol{p}}}} \mathcal{R}^{\frac{q}{\overline{\boldsymbol{p}}}},
 \end{split}
\end{equation}
where we have introduced the shorthand notation 
$$\mathcal{R} := \int_{A_{k_{h+1},\rho_h}} \!\!\!\!\!\!\big(
	(u-k_{h+1})^q+k_{h+1}^\gamma\big)^{s'}\,dx+|A_{k_{h+1},\rho_h}|.$$
With estimate \eqref{eq:daquicomeBCM} at hand, the final step
towards the proof
of the claimed \eqref{stimaJh} consists
of estimating the above term
$\mathcal{R}$. 

To this end we first observe that, by definition
$J_h$, we have
\begin{equation} \label{eq.estimJhgeq}
   \begin{split}
    J_h & = \int_{A_{k_{h},\rho_h}}(u-k_h)^{qs'}\,d x
    \geq \int_{A_{k_{h+1},\rho_h}}(u-k_h)^{qs'}\,d x \\
    & \geq (k_{h+1}-k_h)^{qs'}\,\big|A_{k_{h+1},\rho_h}\big|
    = \bigg(\frac{d}{2^{h+2}}\bigg)^{qs'}\,\big|A_{k_{h+1},\rho_h}\big|\\
    & \geq \bigg(\frac{d}{2^{h+2}}\bigg)^{qs'}\,\big|A_{k_{h+1},\rho_{h+1}}\big|.
   \end{split}
   \end{equation}
Moreover, taking into account that
$$k_{h+1}^{\overline{\Bdsigma}^*}| A_{k_{h+1},\rho_h}|\leq \int_{A_{k_{h+1},\rho_h}}
|u|^{\overline{\Bdsigma}^*}\,  
  dx\le \|u\|_{L^{\overline{\Bdsigma}^*}(B_R(x_0))}^{\overline{\Bdsigma}^*},$$ 
  we immediately derive
  (remind that $k_h\leq d$ for any $h\geq 0$)
   \begin{align*}
    \int_{A_{k_{h+1},\rho_h}}k_{h+1}^{\gamma s'}\,
    d x&=\left(k_{h+1}^{\overline{\Bdsigma}^*}| 
    A_{k_{h+1},\rho_h}|\right)^{\frac{(\gamma-q)s'}{\overline{\Bdsigma}^*}}
    k_{h+1}^{qs'}| A_{k_{h+1},\rho_h}|^{1-\frac{(\gamma-q)s'}{\overline{\Bdsigma}^*}}
   \\ 
   & \le 
    \|u\|_{L^{\overline{\Bdsigma}^*}(B_R(x_0))}^{(\gamma-q)s'}
      d^{qs'}| A_{k_{h+1},\rho_h}|^{1-\frac{(\gamma-q)s'}{\overline{\Bdsigma}^*}}.
      \end{align*}
As a consequence, reminding \eqref{e:Jh+1leJhGC}, we obtain 
 \begin{equation} \label{eq:daBCMGrezza}
 \begin{split}
& \int_{A_{k_{h+1},\rho_h}}
    \Big((u-k_{h+1})^q+k_{h+1}^\gamma\Big)^{s'}\,d x \\
      & \qquad \leq  
      c(\mathbf{data}) \Big(J_h
      + \|u\|_{L^{\overline{\Bdsigma}^*}(B_R(x_0))}^{(\gamma-q)s'}
      d^{qs'}| A_{k_{h+1},\rho_h}|^{\theta}\Big),
 \end{split}
 \end{equation}
 where we have introduced the shorthand notation
 $$\theta = 1-\frac{(\gamma-q)s'}{\overline{\Bdsigma}^*}\in(0,1).$$
 Summing up, by combining \eqref{eq.estimJhgeq}\,-\,\eqref{eq:daBCMGrezza},
 and by taking into acco\-unt \eqref{eq.Jhleqone},
 we then deduce the following estimate for $\mathcal{R}$:
\begin{equation} \label{eq:daBCMI}
\begin{split}
 & \mathcal{R}  \leq c(\mathbf{data})\big[1+\|u\|^{(\gamma-q)s'}_{L^{\overline{\Bdsigma}^*}(B_R(x_0))}\big]
\cdot 2^{hqs'}d^{\frac{q(\gamma-q)(s')^2}{\overline{\Bdsigma}^*}}
\big\{J_h+J_h^{\theta}\big\} \\
&\qquad
\leq c(\mathbf{data})\big[1+\|u\|^{(\gamma-q)s'}_{L^{\overline{\Bdsigma}^*}(B_R(x_0))}\big]
\cdot 2^{hqs'}d^{\frac{q(\gamma-q)(s')^2}{\overline{\Bdsigma}^*}}\times \\
&\qquad\qquad\times (1+ \|u\|_{L^{qs'}(B_R(x_0))}^{qs'(1-\theta)})J_h^\theta\\
& \qquad (\text{by H\"older's inequality
with exponent $\overline{\Bdsigma}^*/(qs') > 1$}) \\
& \qquad\leq 
c(\mathbf{data})\big[1+\|u\|_{L^{\overline{\Bdsigma}^*}(B_R(x_0))}\big]^{(\gamma-q)s'\big(1+\frac{qs'}{\overline{\Bdsigma}^*}\big)}
\times \\
& \qquad\qquad\times  2^{hqs'}d^{\frac{q(\gamma-q)(s')^2}{\overline{\Bdsigma}^*}}J_h^\theta,
    \end{split}
\end{equation}
where we have also used the fact that $R_0\leq 1$.
\vspace{0.1cm}

Thanks to
estimate \eqref{eq:daBCMI}, we can now complete
the proof of \eqref{stimaJh}. Inde\-ed,
 ga\-the\-ring \eqref{eq.estimJhgeq}\,-\,\eqref{eq:daBCMI}, from \eqref{eq:daquicomeBCM}
 we obtain
 \begin{align*}
  J_{h+1}
  &\leq c(\mathbf{data})\Big(\frac{2^h}{d}\Big)^{qs'(1-\frac{qs'}{\overline{\Bdsigma}^*})}
  J_h^{1-\frac{qs'}{\overline{\Bdsigma}^*}}
  \cdot\Big(\frac{2^h}{R}\Big)^{\frac{q^2s'}{\overline{\boldsymbol{p}}}}\times
  \\
  &\qquad\times
  \Big\{\big[1+\|u\|_{L^{\overline{\Bdsigma}^*}(B_R(x_0))}\big]^{(\gamma-q)s'\big(1+\frac{qs'}{\overline{\Bdsigma}^*}\big)}\cdot 
   2^{hq s'} d^{\frac{q(\gamma-q)(s')^2}{\overline{\Bdsigma}^*}} 
   J_h^\theta\Big\}^{\frac{q}{\overline{\boldsymbol{p}}}} \\
   & \leq c(\mathbf{data})\big[1+
    \|u\|_{L^{\overline{\Bdsigma}^*}(B_R(x_0))}\big]^{
    		\frac{q}{\overline{\boldsymbol{p}}}(\gamma-q)s'\big(1+\frac{qs'}{\overline{\Bdsigma}^*}\big)}
    \cdot\frac{1}{d^{\delta_1}R^{\delta_2}}\cdot \lambda^h J_{h}^{1+\alpha},
 \end{align*}
 where the constants $\lambda,\delta_1,\delta_2$ and $\alpha$ are explicitly
 given by
 \begin{align*}
  (\ast)\,\,&\lambda = 8^{\frac{q^2 s'}{\overline{\boldsymbol{p}}}};
  \\[0.1cm]
  (\ast)  \,\,&\delta_1 =qs'\Big(1-\frac{qs'}{\overline{\Bdsigma}^*}\Big)-
  \frac{(\gamma-q)(qs')^2}{\overline{\boldsymbol{p}}\cdot\overline{\Bdsigma}^*}
  =qs'\left\{
  	1-\frac{qs'}{\overline{\Bdsigma}^*} + \frac{q^2s'}{\overline{\boldsymbol{p}}\cdot\overline{\Bdsigma}^*}- \frac{\gamma qs'}{\overline{\boldsymbol{p}}\cdot\overline{\Bdsigma}^*}\right\};
  \\[0.1cm]
  (\ast)\,\,&\delta_2 = \frac{q^2s'}{\overline{\boldsymbol{p}}};\\[0.1cm]
  (\ast)\,\,&\alpha = \frac{\theta q}{\overline{\boldsymbol{p}}}-\frac{qs'}{\overline{\Bdsigma}^*}
  = \frac{q}{\overline{\boldsymbol{p}}}\left\{1 +
\left( q-\overline{\boldsymbol{p}}\right)
  \frac{s'}{\overline{\Bdsigma}^*}-\frac{\gamma s'}{\overline{\Bdsigma}^*}\right\}.
 \end{align*}
 We finally observe that, since $s'\geq 1$ and since $q\geq\overline{\boldsymbol{p}}$ (see
 Lemma \ref{integrabilit-u}), we clearly have that $\delta_2 > 0$ and $\lambda> 1$;
 moreover, by using assumption \eqref{eq:assumptionsParameters} it is not difficult to recognize that
 $\delta_1,\alpha> 0$. 
 
 Indeed,  as regards $\delta_1$ we have
 \begin{equation} \label{e:delta1} 
 \begin{split}
 \delta_1>0& \,\,\Longleftrightarrow\,\,
  \frac{\gamma  qs'}{\overline{\boldsymbol{p}}\cdot\overline{\Bdsigma}^*}
 < 1-\frac{qs'}{\overline{\Bdsigma}^*} + \frac{q^2s'}{\overline{\boldsymbol{p}}\cdot\overline{\Bdsigma}^*}\\
 &\,\,\Longleftrightarrow\,\,\gamma<
\overline{\boldsymbol{p}}\left(
\frac{\overline{\Bdsigma}^*}{qs'} 
- 1\right)+q \\
& \,\,\Longleftrightarrow\,\,
\gamma<\frac{\overline{\boldsymbol{p}}}{q}\cdot
\frac{\overline{\Bdsigma}^*}{s'}+q- 
\overline{\boldsymbol{p}},
 \end{split}	
 \end{equation}
and this last inequality is \emph{precisely} the third inequality in \eqref{eq:assumptionsParameters}.
As regards the number $\alpha$, instead,  we notice that 
\[\alpha>0\,\,\Longleftrightarrow\,\, \frac{\gamma s'}{\overline{\Bdsigma}^*}<
1+
\left( q-\overline{\boldsymbol{p}}\right)
\frac{s'}{\overline{\Bdsigma}^*}\,\,\Longleftrightarrow\,\,\gamma <\frac{\overline{\Bdsigma}^*}{s'}+
 q-\overline{\boldsymbol{p}}
\]
and this condition is implied by \eqref{e:delta1} since $\frac{\overline{\boldsymbol{p}}}{q}\le 1$. 
It is worth highlight\-ing that, if $p_i=p=\gamma=q$ and $r_i=s=\infty$, then
 $$\delta_1=p\big(1-\frac{p}{p^*}\big)=\frac{p^2}{n}\quad\text{and}\quad 
 \alpha=1-\frac{p}{p^*}=\frac{p}{n}.$$ 
 Summing up, the claimed \eqref{stimaJh} is fully established.
 \medskip
 
 Thanks to the estimated \eqref{stimaJh} just established,
 we are finally in a position to com\-plete the demonstration of the theorem.
 Indeed, setting
 $$A = \mathbf{C}\big[1+
    \|u\|_{L^{\overline{\Bdsigma}^*}(B_{R}(x_0))})\big]^{
    	\frac{q}{\overline{\boldsymbol{p}}}(\gamma-q)s'\big(1+\frac{qs'}{\overline{\Bdsigma}^*}\big)}
    \cdot\frac{1}{d^{\delta_1}R^{\delta_2}},$$
    we can clearly rewrite the cited estimate \eqref{stimaJh} as follows
     $$J_{h+1}\leq A\,\lambda^h J_h^{1+\alpha}.$$
 Moreover, taking into account \eqref{eq.Jhleqone} (and recalling that
 $\delta_1 > 0$), we easily see that we can choose $d\geq 2$ in such a way that
 \begin{align*}
  A^{-\frac{1}{\alpha}}\lambda^{-\frac{1}{\alpha^2}}
  & = \left(\frac{d^{\delta_1}R^{\delta_2}}{ \mathbf{C}\big[1+
  	\|u\|_{L^{\overline{\Bdsigma}^*}(B_{R}(x_0))}\big]^{
  		\frac{q}{\overline{\boldsymbol{p}}}(\gamma-q)s'\big(1+\frac{qs'}{\overline{\Bdsigma}^*}\big)}}\right)^{1/\alpha}\lambda^{-1/\alpha^2}
    \\
    & \geq \|u\|_{L^{qs'}(B_R(x_0))}^{qs'}\geq J_0.
  \end{align*}
  Indeed, by H\"older's inequality (and since $R\leq R_0\leq 1$) we have  \[ \|u\|_{L^{qs'}(B_R(x_0))}^{qs'}\le
   \|u\|_{L^{\overline{\Bdsigma}^*}(B_R(x_0))}^{qs'}|B_R|^{1-\frac{qs'}{\overline{\Bdsigma}^*}}\le c_0
   \big[1+
  \|u\|_{L^{\overline{\Bdsigma}^*}(B_{R}(x_0))}\big]^{qs'}\]
  where $c_0 > 0$ is a positive constant only depending  on $n,q,s,
  \boldsymbol{\sigma}$; as a consequence of this estimate, it 
  then suffices to choose 
  \begin{equation} \label{eq:choiced}
   d= \Big\{\frac{\mathbf{C}c_0^{\alpha} \lambda^{1/\alpha}}{R^{\delta_2}}\big[1+
    \|u\|_{L^{\overline{\Bdsigma}^*}(B_{R}(x_0))}\big
    ]^{
   	\frac{q}{\overline{\boldsymbol{p}}}(\gamma-q)s'\big(1+\frac{qs'}{\overline{\Bdsigma}^*}\big)+qs'\alpha}\Big\}^{1/\delta_1}\geq 2
   \end{equation}
   (by possibly enlarging $\mathbf{C}$ if needed, and since $R_0\leq 1<\lambda$).
   \vspace{0.1cm}
   
    Thus, by exploiting Lemma \ref{lemma-giusti2} \emph{with this choice of $d$},
    we get
    $$\lim_{h\to +\infty}J_h=\int_{A_{d,R/2}}|u-d|^{qs'}\,dx=0,$$
    from which we derive that
 $|A_{d,R/2}|=0$, that is,
 $$\text{$u\leq d$ a.e.\,in $B_{R/2}(x_0)$}.$$
To prove that $u$ is also bounded from below by $-d$ it suffices to observe that,
   setting $v := -u$, then $v\in W^{1,1}_{\mathrm{loc}}(\Omega)$ is a quasi-minimizer
   of the functional
   $${G}(v) := \int_{\Omega}g(x,v,Dv)\,d x,$$
   where $g:\Omega\times\R\times\mathbb{R}^n\to\R$ is the function defined by
   $$g(x,w,\xi) := f(x,-w,-\xi).$$
   As a consequence, since $g$ clearly 
   satisfies assumptions $\mathbf{(H1)}$\,-\,$\mathbf{(H2)}$ (with the \emph{same data}), 
   by the above argument we deduce that
   $$\text{$-u \leq d$ for a.e.\,$x\in B_{R/2}(x_0)$}.$$
   Summing up, we have proved that, if $d\geq 2$ is as in \eqref{eq:choiced}, one has
   \begin{equation}  \label{eq.ulocallybounded}
    \|u\|_{L^\infty(B_{R/2}(x_0))}\leq d.
   \end{equation}
   We finally observe that, owing to the explicit expression of $\alpha$, we have
   \begin{align*}
    & \frac{q}{\overline{\boldsymbol{p}}}(\gamma-q)s'\Big(1+\frac{qs'}
    {\overline{\boldsymbol{\sigma}}^*}\Big)+qs'\alpha \\
    & \qquad 
    = \frac{qs'}{\overline{\boldsymbol{p}}}\left\{(\gamma-q)\Big(1+\frac{qs'}
    {\overline{\boldsymbol{\sigma}}^*}\Big)+q\Big[1 +
\left( q-\overline{\boldsymbol{p}}\right)
  \frac{s'}{\overline{\Bdsigma}^*}-\frac{\gamma s'}{\overline{\Bdsigma}^*}\Big]\right\} \\
  & \qquad = \frac{qs'}{\overline{\boldsymbol{p}}}\Big(\gamma-\frac{qs'\,\overline{\boldsymbol{p}}}{\overline{\Bdsigma}^*}\Big);
     \end{align*}
   as a consequence, by definition of $\delta_1,\delta_2$ we obtain
   $$d\leq c(\mathbf{data})\frac{1}{R^{\frac{q\overline{\boldsymbol{\Bdsigma}}^*}
     {\overline{\boldsymbol{p}}\cdot\overline{\boldsymbol{\Bdsigma}}^*-{qs'}({\gamma-q}+
     \overline{\boldsymbol{p}})}}}
     \big[1+
    \|u\|_{L^{\overline{\Bdsigma}^*}(B_{R}(x_0))}\big]
    ^{\frac{\overline{\boldsymbol{\sigma}}^*\gamma-qs'\overline{\boldsymbol{p}}}{\overline{\boldsymbol{p}}\cdot\overline{\boldsymbol{\sigma}}^*-qs'(\gamma-q+\overline{\boldsymbol{p}})}}.$$
     This, together with \eqref{eq.ulocallybounded}, 
     gives the desired \eqref{stima-teorema}.
\end{proof}

%
%

\end{document}